\newtheorem{theorem}{Theorem}
\newtheorem{lemma}{Lemma}
\newtheorem{corollary}{Corollary}
\theoremstyle{definition}
\newtheorem{definition}{Definition}
\newtheorem{example}{Example}
\begin{document}

\title[Matrices with totally positive powers]{Matrices with totally positive powers and their generalizations}


\author{Olga Y. Kushel}

\address{Olga Kushel, Institut f\"{u}r Mathematik, MA 4-2, Technische Universit\"{a}t Berlin, Strasse des 17. Juni 136  D-10623 Berlin, Germany}
\email{kushel@mail.ru}

\CorrespondingAuthor{Olga Y. Kushel}


\date{DD.MM.YYYY}                               

\keywords{eventually positive matrices; eventual properties; total positivity; sign-symmetric matrices; P-matrices}

\subjclass{Primary 15A48  Secondary 15A18  15A75}

\thanks{The research leading to these results was carried out at Technische Universit\"{a}t Berlin and has received funding from the European Research Council under the European Union's Seventh Framework Programme (FP7/2007–2013) / ERC grant agreement n$^\circ$ 259173}

\begin{abstract}
        In this paper, eventually totally positive matrices (i.e. matrices all whose powers starting with some point are totally positive) are studied. We present a new approach to eventual total positivity which is based on the theory of eventually positive matrices. We mainly focus on the spectral properties of such matrices. We also study eventually J-sign-symmetric matrices and matrices, whose powers are $P$-matrices.
\end{abstract}

\maketitle



\section{Introduction}
In the 1940th Gantmacher and Krein described the spectral properties of strictly totally positive matrices (i.e. of those all whose minors are positive). Among this results they proved sufficient criteria for a matrix $\mathbf A$ to have a strictly totally positive power ${\mathbf A}^k$ for some positive even integer $k$ (see \cite{GANTKREI}).

Ever since, the theory of eventually positive matrices (i.e. matrices $\mathbf A$, whose all powers ${\mathbf A}^k$ are (entry-wise) positive starting with some positive integer $k_0$) was developed (see \cite{FRIED2}, \cite{TARH}, \cite{JT}). Such matrices were characterized by their spectral properties, the same as of positive matrices (i.e the biggest in absolute value eigenvalue is positive and the corresponding eigenvectors of both $\mathbf A$ and the transpose of $\mathbf A$ can be chosen to be positive). So the theory of positive matrices was extended to the matrices with some negative entries. Different aspects of eventual positivity were studied in \cite{BE}-\cite{EHT}, \cite{NO}. Total positivity of Hadamard powers of matrices as well as continuous real powers of totally positive matrices are studied in \cite{FAJ}.

In this paper, a new approach (through eventual positivity) to the matrices with totally positive powers, described by Gantmacher and Krein is provided. Using the theory of eventual positivity, we give necessary and sufficient characterization of eventually totally positive matrices (i.e. matrices whose all powers ${\mathbf A}^k$ are strictly totally positive starting from some positive integer $k_0$). Then, we analyze certain cone-theoretic generalization of strictly totally positive matrices and study some properties of eventually $P$-matrices (i.e. of matrices all whose powers starting with some point have positive principal minors).

The paper is organized as follows. We first collect definitions and statements on the Perron--Frobenius property and eventually positive matrices. We provide certain generalization of the Perron--Frobenius property which characterizes the class of matrices, eventually similar to positive ones. In section 2, we define eventually totally positive matrices and the equivalent property of matrix eigenvalues and eigenvectors. We provide some examples to show how the theory of totally positive matrices is extended to the matrices with some negative minors. Section 3 deals with similarity transformations preserving eventual total positivity and related properties. In section 4, we describe a cone-theoretic generalization of the class of eventually totally positive matrices, namely eventually totally J-sign-symmetric matrices. In section 5, we analyze the structure of eventually $P$-matrices with positive distinct spectra.

\section{Eventually positive matrices and their generalizations}
Here, as usual, we denote $\rho(A)$ the spectral radius of a matrix $\mathbf A$. An {\it eigenfunctional } of a matrix ${\mathbf A}$ corresponding to an eigenvalue $\lambda$ is defined as an eigenvector of ${\mathbf A}^T$ (the transpose of $\mathbf A$), corresponding to the same eigenvalue. Given a vector $x \in {\mathbb R}^n$ with the coordinates $(x^1, \ \ldots, \ x^n)$, we define the {\it signature vector} {\rm Sign}(x) as follows
$${\rm Sign}(x) := ({\rm sgn}(x^1), \ \ldots, \ {\rm sgn}(x^n))^T.$$
For a column vector $x = (x^1, \ \ldots, \ x^n)^T$ and a row vector $y = (y_1, \ \ldots, \ y_n)$, their tensor product $x \otimes y$ is considered as the following $n \times n$ matrix:
$$ x \otimes y = \begin{pmatrix} x_1y_1 & \ldots & x_1y_n \\
\ldots & \ldots & \ldots \\ x_ny_1 & \ldots & x_ny_n  \end{pmatrix}.$$

Let us recall some definitions (see, for example, \cite{ELS1}, \cite{ELS2}, \cite{JT}, \cite{NO}).

\begin{definition}
For a real $n \times n$ matrix $\mathbf A$, an eigenvalue $\lambda$ of $\mathbf A$ is called a
{\it dominant eigenvalue} if $|\lambda| = \rho(A)$. In addition, $\lambda$ is called a {\it strictly dominant eigenvalue}, if $|\lambda| > |\mu|$ for any other eigenvalue $\mu$ of $\mathbf A$.
\end{definition}

\begin{definition}
An $n \times n$ matrix $\mathbf A$ is said to have the {\it Perron--Frobenius property} if $\mathbf A$ has a positive dominant eigenvalue with the corresponding nonnegative eigenvector. A matrix
$\mathbf A$ is said to have the {\it strong Perron--Frobenius property} if $\mathbf A$ has
a unique positive simple strictly dominant eigenvalue with the corresponding positive eigenvector.
\end{definition}

Following Johnson and Tarazaga (see \cite{JT}), we denote $PF$ a class of all matrices $\mathbf A$ which have the strong Perron--Frobenius property together with their transposes ${\mathbf A}^T$.

Let us give the following generalization of the strong Perron--Frobenius property.

 \begin{definition}
 A matrix $\mathbf A$ is said to have the {\it signature equality property}
 if it satisfies the following conditions:
\begin{enumerate}
\item $\mathbf A$ has a unique positive simple strictly dominant eigenvalue $\lambda > 0$ with the corresponding eigenvector $x$ and eigenfunctional $x^*$;
\item Both $x$ and $x^*$ have no zero coordinates, and the equality ${\rm Sign}(x) = {\rm Sign}(x^*)$ holds.
\end{enumerate}
\end{definition}

If a matrix $\mathbf A$ has the strong Perron--Frobenius property, then it obviously has
the signature equality property. For the analogue of the Perron--Frobenius property, we have the following definition.

 \begin{definition}
 A matrix $\mathbf A$ is said to have the {\it weak signature equality property}
 if it satisfies the following conditions:
\begin{enumerate}
\item $\mathbf A$ has a positive dominant eigenvalue $\lambda > 0$ with the corresponding eigenvector $x$ and eigenfunctional $x^*$;
\item The inequalities $x^i(x^*)^i \geq 0$, $i = 1, \ \ldots, \ n$ hold for the coordinates $(x^1, \ \ldots, \ x^n)$ of the eigenvector $x$ and $((x^*)^1, \ \ldots, \ (x^*)^n)$ of the eigenfunctional $x^*$.
\end{enumerate}
\end{definition}

To show the link between the peripheral spectrum of a matrix and the asymptotic of matrix powers, we need the following lemma (the similar statement can be found in \cite{GANT}).

\begin{lemma}\label{L1} Let an $n \times n$ matrix ${\mathbf A}$ have a unique simple strictly dominant eigenvalue $\lambda_1 = \rho(A)$ with the corresponding eigenvector $x_1$ and eigenfunctional $x_1^*$. Then the following approximation holds:
$$\frac{1}{\rho(A)^k}{\mathbf A}^k \rightarrow x_1 \otimes x_1^* \qquad \mbox{as} \quad k \rightarrow \infty.$$
\end{lemma}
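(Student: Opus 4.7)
The plan is to pass to the Jordan canonical form of $\mathbf{A}$, isolate the contribution of the simple strictly dominant eigenvalue $\lambda_1$, and show that every other block's contribution decays under division by $\lambda_1^k$.

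First I would write $\mathbf{A} = P J P^{-1}$ with $J$ in Jordan form, ordered so that the $1 \times 1$ block $(\lambda_1)$ sits in the top-left corner and the remaining Jordan blocks $J_2, \ldots, J_s$ correspond to eigenvalues $\lambda_2, \ldots, \lambda_s$ satisfying $|\lambda_i| < \lambda_1$ for $i \geq 2$. Then
$$\frac{1}{\lambda_1^k}\mathbf{A}^k \;=\; P\left(\frac{1}{\lambda_1^k} J^k\right) P^{-1}.$$
Block-diagonality of $J$ reduces the problem to analyzing each block separately. For the top-left block the quotient is $1$ for every $k$. For each remaining block $J_i$ of size $m_i$, the standard power formula gives $\|J_i^k\| = O(k^{m_i-1}|\lambda_i|^k)$, so
$$\left\|\frac{1}{\lambda_1^k} J_i^k\right\| = O\!\left(k^{m_i-1}\bigl(|\lambda_i|/\lambda_1\bigr)^k\right) \longrightarrow 0,$$
since the geometric factor with ratio $<1$ dominates any polynomial in $k$. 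Consequently $\lambda_1^{-k} J^k \to E_{11}$, the matrix unit with a single $1$ in position $(1,1)$.

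Next I would identify the limit $P E_{11} P^{-1}$. Denoting by $p_1$ the first column of $P$ and by $q_1^T$ the first row of $P^{-1}$, we get $P E_{11} P^{-1} = p_1 q_1^T$. The column $p_1$ is an eigenvector of $\mathbf{A}$ for $\lambda_1$, so $p_1 = \alpha\, x_1$ for some scalar $\alpha$; similarly, the row $q_1^T$ satisfies $q_1^T \mathbf{A} = \lambda_1 q_1^T$, so $q_1$ is a scalar multiple of $x_1^*$, the eigenfunctional. The identity $P^{-1}P = I$ forces $q_1^T p_1 = 1$, which is exactly the normalization convention making the rank-one matrix $p_1 q_1^T$ coincide with $x_1 \otimes x_1^*$ as defined in the preliminaries (i.e.\ with $x_1$ and $x_1^*$ scaled so that $(x_1^*)^T x_1 = 1$).

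The only potential obstacle is really bookkeeping: one must check that the polynomial growth $k^{m_i-1}$ arising from non-trivial Jordan blocks is killed by the strict inequality $|\lambda_i|/\lambda_1 < 1$, and one must be explicit about the normalization implicit in the notation $x_1 \otimes x_1^*$. Once these two points are handled, the convergence statement follows immediately from the block-diagonal computation above, with no further estimates required.
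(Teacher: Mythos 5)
Your proof is correct and follows essentially the same route as the paper's: Jordan decomposition with the dominant $1\times 1$ block isolated in the top-left corner, decay of the remaining blocks under division by $\lambda_1^k$ (the paper phrases this as $\rho\bigl(\tfrac{1}{\rho(A)}\Lambda'\bigr)<1$ implying $\tfrac{1}{\rho(A)^k}(\Lambda')^k\to 0$), and identification of the limit as the rank-one product of the first column of the similarity matrix with the first row of its inverse. You are in fact slightly more explicit than the paper on two minor points: the polynomial factor $k^{m_i-1}$ coming from non-trivial Jordan blocks, and the normalization $q_1^Tp_1=1$ that is implicit in writing the limit as $x_1\otimes x_1^*$.
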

\begin{proof} The proof literally repeats the reasoning of Johnson and Tarazaga (see \cite{JT}, p. 328, proof of Theorem 1). Let us write the Jordan decomposition of $\mathbf A$:
$$ {\mathbf A} = {\mathbf S}{\Lambda}{\mathbf S}^{-1},$$
where ${\Lambda}$ is the Jordan canonical form of ${\mathbf A}$. In this case, we have
$${\Lambda} = \begin{pmatrix} \lambda_1 & 0  \\
 0 & \Lambda'\end{pmatrix} = \rho(A)\begin{pmatrix} 1 & 0 \\
0 & \frac{1}{\rho(A)}\Lambda' \end{pmatrix}.$$

The columns of the matrix $\mathbf S$ are the eigenvectors of $\mathbf A$ and the rows of of the matrix ${\mathbf S}^{-1}$ are the eigenfunctionals of $\mathbf A$. So the first column of the matrix $\mathbf S$ coincides with the first eigenvector $x_1$ and the first row of ${\mathbf S}^{-1}$ coincides with the first eigenfunctional $x_1^*$. Thus we can formally write ${\mathbf S} = \begin{pmatrix} x_1 & S' \end{pmatrix}$ and ${\mathbf S}^{-1} = \begin{pmatrix} x_1^* \\ (S^{-1})' \end{pmatrix}$. (Here $S'$ is an $n \times (n-1)$ matrix and $(S^{-1})'$ is an $(n-1)\times n$ matrix.) Since $\mathbf A$ and ${\mathbf A}^k$ share the same eigenvectors and eigenfunctionals, we obtain:
 $${\mathbf A}^k = {\mathbf S}{\Lambda}^k{\mathbf S}^{-1},  \qquad k = 2,3, \ldots. $$
Thus
$${\mathbf A}^k = \rho^k(A)\begin{pmatrix} x_1 & S' \end{pmatrix}\begin{pmatrix} 1 & 0 \\
0 & \frac{1}{\rho^k(A)}(\Lambda')^k \end{pmatrix}\begin{pmatrix}x_1^* \\ (S^{-1})'\end{pmatrix}.$$
Since $\rho(\frac{1}{\rho(A)}\Lambda') < 1$, we have $\frac{1}{\rho^k(A)}(\Lambda')^k \rightarrow 0$ as $k \rightarrow \infty$. In coordinates, it means
$$\frac{1}{\rho^k(A)}{\mathbf A}^k = $$ $$\begin{pmatrix} x_1^1 & s_{11}' & \ldots & s_{1n-1}' \\
x_1^2 & s_{21}' & \ldots & s_{2n-1}' \\
\ldots & \ldots & \ldots & \ldots\\
x_1^n & s_{nn-1}' & \ldots & s_{nn-1}'   \end{pmatrix}\begin{pmatrix} 1 & 0 & \ldots & 0 \\
0 & \epsilon_{11} & \ldots & \epsilon_{1n-1} \\
0 & \ldots & \ldots & \ldots \\
0 & \epsilon_{n-11} & \ldots & \epsilon_{n-1n-1} \end{pmatrix}\begin{pmatrix}(x_1^*)^1 & (x_1^*)^2 & \ldots & (x_1^*)^n \\ (s^{-1})_{11}' & (s^{-1})_{12}' & \ldots & (s^{-1})_{1n}' \\ \ldots & \ldots & \ldots & \ldots \\ (s^{-1})_{n-11}' & (s^{-1})_{n-12}' & \ldots & (s^{-1})_{n-1n}' \end{pmatrix}=$$
$$ \begin{pmatrix} x_1^1(x_1^*)^1  & x_1^1(x_1^*)^2  & \ldots & x_1^1(x_1^*)^n \\
x_1^2(x_1^*)^1  & x_1^2(x_1^*)^2  & \ldots & x_1^2(x_1^*)^n  \\
\ldots & \ldots & \ldots & \ldots \\
x_1^n(x_1^*)^1  & x_1^n(x_1^*)^2  & \ldots & x_1^n(x_1^*)^n  \\ \end{pmatrix} + \begin{pmatrix} \epsilon_{11}' & \epsilon_{12}' & \ldots &  \epsilon_{1n}' \\
\epsilon_{21}' &  \epsilon_{22}' & \ldots &  \epsilon_{2n}' \\
\ldots & \ldots & \ldots & \ldots \\
 \epsilon_{n1}' & \epsilon_{n2}' & \ldots & \epsilon_{nn}' \\ \end{pmatrix}.$$
Thus we obtain $\dfrac{1}{\rho^k}{\mathbf A}^k \rightarrow x_1\otimes x_1^*$ as $k \rightarrow \infty$.\qed
\end{proof}

Let us recall the following definition introduced in \cite{FRIED2}.

\begin{definition}
A real $n \times n$ real matrix ${\mathbf A}$ is called {\it eventually positive (EP)} (respectively, {\it eventually nonnegative (EN)}) if there exists a positive integer $k_0$ such that ${\mathbf A}^k > 0$ (respectively, ${\mathbf A}^k \geq 0$) for all $k \geq k_0$. For an EP matrix, the least such $k_0$ is called the {\it power index} of ${\mathbf A}$.
\end{definition}

The following statement is proved for eventually positive matrices (see \cite{JT}, p. 328, Theorem 1).

\begin{theorem}[Johnson, Tarazaga]
Let ${\mathbf A} \in {\mathbb R}^{n \times n}$. Then the following statements are equivalent.
\begin{enumerate}
\item[\rm 1.] Both of the matrices ${\mathbf A}$ and ${\mathbf A}^T$ have the strong Perron--Frobenius property.
\item[\rm 2.] The matrix ${\mathbf A}$ is eventually positive.
\item[\rm 3.] There is a positive integer $k$ such that ${\mathbf A}^k > 0$ and ${\mathbf A}^{k+1} > 0$.
\end{enumerate}
\end{theorem}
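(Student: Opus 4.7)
The plan is to prove the cycle $(1)\Rightarrow(2)\Rightarrow(3)\Rightarrow(1)$; the first two implications are short, and the real content is in reconstructing spectral information about $\mathbf{A}$ from eventual positivity of $\mathbf{A}^k$.

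For $(1)\Rightarrow(2)$, I would apply Lemma~\ref{L1} directly. The strong Perron--Frobenius property for $\mathbf{A}$ gives a simple strictly dominant eigenvalue $\lambda_1=\rho(A)>0$ with a strictly positive eigenvector $x_1$, and the strong Perron--Frobenius property for $\mathbf{A}^T$ gives a strictly positive eigenfunctional $x_1^*$ (attached to the same $\lambda_1$, since transposition preserves spectrum). Therefore the rank-one limit matrix $x_1\otimes x_1^*$ in Lemma~\ref{L1} is entrywise strictly positive. Since $\rho(A)^k>0$ for all $k$, the convergence $\rho(A)^{-k}\mathbf{A}^k\to x_1\otimes x_1^*$ forces $\mathbf{A}^k$ itself to be entrywise positive for all sufficiently large $k$.

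The implication $(2)\Rightarrow(3)$ is trivial: if $\mathbf{A}^k>0$ for all $k\geq k_0$, pick any such $k$.

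For $(3)\Rightarrow(1)$, which I expect to be the main obstacle, I start from $\mathbf{A}^k>0$ and $\mathbf{A}^{k+1}>0$ and apply the classical Perron--Frobenius theorem to these two strictly positive matrices. Each of them has a simple, strictly dominant, positive eigenvalue with a strictly positive eigenvector, and the same holds for their transposes. Let the eigenvalues of $\mathbf{A}$ be $\lambda_1,\dots,\lambda_n$; then the eigenvalues of $\mathbf{A}^k$ and $\mathbf{A}^{k+1}$ are $\lambda_i^k$ and $\lambda_i^{k+1}$ with the same algebraic multiplicities and the same (generalized) eigenspaces. The key observation is that there must exist a single index (say $i=1$) such that both $\lambda_1^k$ and $\lambda_1^{k+1}$ are the respective Perron roots; for if two different indices produced the Perron roots of $\mathbf{A}^k$ and $\mathbf{A}^{k+1}$, one of those Perron roots would fail to be strictly dominant. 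Then $\lambda_1^k>0$ and $\lambda_1^{k+1}>0$ give $\lambda_1=\lambda_1^{k+1}/\lambda_1^k>0$, simplicity of $\lambda_1^k$ lifts to simplicity of $\lambda_1$, and for any $j\neq 1$ the inequality $|\lambda_j|^k=|\lambda_j^k|<\lambda_1^k$ yields $|\lambda_j|<\lambda_1$, so $\lambda_1$ is strictly dominant.

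It remains to produce the positive eigenvector. Since $\mathbf{A}$ commutes with $\mathbf{A}^k$, it preserves the one-dimensional eigenspace of $\mathbf{A}^k$ corresponding to $\lambda_1^k$, which is spanned by the (strictly positive) Perron eigenvector $x$ of $\mathbf{A}^k$. Hence $\mathbf{A}x=\mu x$ for some scalar $\mu$ with $\mu^k=\lambda_1^k$. Applying the same argument to $\mathbf{A}^{k+1}$ (whose Perron eigenvector must equal $x$ up to scalar, being positive and attached to a simple Perron root) gives $\mu^{k+1}=\lambda_1^{k+1}$, hence $\mu=\lambda_1>0$. Repeating the entire argument for $\mathbf{A}^T$ (using $(\mathbf{A}^T)^k=(\mathbf{A}^k)^T>0$) produces the positive eigenfunctional, establishing the strong Perron--Frobenius property for both $\mathbf{A}$ and $\mathbf{A}^T$. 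The delicate point here, and the reason Johnson--Tarazaga use \emph{two} consecutive powers rather than one, is precisely that $\mathbf{A}^k>0$ alone only pins down $\lambda_1^k$, not the sign of $\lambda_1$; the $k+1$ hypothesis is what rules out negative or complex $k$-th roots.
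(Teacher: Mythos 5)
Your proposal is correct and follows essentially the same route the paper relies on: the implication $(1)\Rightarrow(2)$ via the rank-one limit of Lemma~1, and $(3)\Rightarrow(1)$ via applying the Perron theorem to the two consecutive positive powers together with the parity argument that pins down the sign of $\lambda_1$ (the paper itself only cites Johnson--Tarazaga for this theorem, but reproduces exactly this argument in Lemma~1 and in the proof of its Theorem~3 for the SJS generalization). No gaps worth flagging.
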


A weaker statement holds for eventually nonnegative matrices (see \cite{NO}, p. 136, Theorem 2.3).

\begin{theorem}[Noutsos]
Let ${\mathbf A} \in {\mathbb R}^{n \times n}$ be an eventually nonnegative matrix that is not nilpotent. Then both matrices ${\mathbf A}$ and ${\mathbf A}^T$ have the Perron--Frobenius property.
\end{theorem}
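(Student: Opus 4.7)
The plan is to reduce to the classical Perron--Frobenius theorem applied to the nonnegative power $\mathbf A^{k_0}$ and then lift its Perron eigenpair back to $\mathbf A$ itself by an averaging trick. First, since $\mathbf A$ is not nilpotent, $\rho(A) > 0$. Because $\mathbf A^{k_0} \geq 0$, the classical theorem provides a nonzero $x \geq 0$ with $\mathbf A^{k_0} x = \rho(\mathbf A^{k_0}) x = \rho(A)^{k_0} x$.

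The technical core is to promote this to nonnegativity of \emph{every} forward iterate $\mathbf A^j x$. This follows from the identity
$$\mathbf A^{k_0+j} x = \mathbf A^j \bigl(\mathbf A^{k_0} x\bigr) = \rho(A)^{k_0} \, \mathbf A^j x,$$
valid for every $j \geq 0$: the left-hand side is nonnegative as a product of a nonnegative matrix by a nonnegative vector, and $\rho(A) > 0$, so $\mathbf A^j x \geq 0$.

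With this sign control I would then form the averaged vector
$$w := \sum_{j=0}^{k_0 - 1} \rho(A)^{-j} \, \mathbf A^j x,$$
which is nonnegative and nonzero (the $j = 0$ summand is $x$ itself). A routine index shift combined with $\mathbf A^{k_0} x = \rho(A)^{k_0} x$ yields $\mathbf A w = \rho(A) w$, exhibiting $\rho(A)$ as a positive dominant eigenvalue of $\mathbf A$ with a nonnegative eigenvector. The analogous claim for $\mathbf A^T$ follows by applying the same argument to $\mathbf A^T$, which is also eventually nonnegative and shares the spectral radius of $\mathbf A$.

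The step I expect to be the main obstacle is the sign-control identity $\mathbf A^j x \geq 0$ for all $j \geq 0$; without it the averaged vector $w$ carries no sign information and the construction collapses. One should in particular resist trying to invoke Lemma~\ref{L1}, which presupposes a simple \emph{strictly} dominant eigenvalue---a hypothesis that may fail here, since $\mathbf A$ can a priori have several dominant eigenvalues corresponding to nonreal $k_0$-th roots of $\rho(A)^{k_0}$.
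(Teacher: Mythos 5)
Your argument is correct and complete. The paper itself offers no proof of this statement---it is quoted from Noutsos \cite{NO} with a bare citation---so there is nothing to compare line by line; but your reduction is sound at every step: $\rho(A)>0$ because $\mathbf A$ is not nilpotent; the weak Perron--Frobenius theorem applied to $\mathbf A^{k_0}\geq 0$ yields $x\gneq 0$ with $\mathbf A^{k_0}x=\rho(A)^{k_0}x$; the identity $\mathbf A^{k_0+j}x=\rho(A)^{k_0}\mathbf A^j x$ together with $\mathbf A^{k_0+j}\geq 0$ forces $\mathbf A^j x\geq 0$ for all $j$; and the Ces\`aro-type average $w=\sum_{j=0}^{k_0-1}\rho(A)^{-j}\mathbf A^j x$ is nonzero (it dominates $x$ entrywise), nonnegative, and satisfies $\mathbf A w=\rho(A)w$ by the index shift you describe, since the $j=k_0$ term collapses back to $x$. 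The transpose case follows as you say. Your caution about Lemma~\ref{L1} is also well placed: eventual nonnegativity does not guarantee a simple strictly dominant eigenvalue, so the asymptotic $\frac{1}{\rho(A)^k}\mathbf A^k\to x_1\otimes x_1^*$ is unavailable here, and the averaging trick is exactly the right substitute. This is essentially the standard argument (and close in spirit to Noutsos's original one), so the proposal can stand as a self-contained proof.
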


Note, that the converse statement may not hold (unlike the case of eventually positive matrices). For the counterexample, see \cite{ELS2}, p. 394, Example 2.5.

The following generalization of nonnegative matrices is introduced in \cite{KU}.

Let $J$ be any subset of $[n] := \{1, 2, \ldots, n\}$. Then $J^c:=[n] \setminus J$ and
$$[n] \times [n] = (J\times J)\cup(J^c \times J^c)\cup (J \times J^c) \cup (J^c \times J)$$ is a partition of $[n] \times [n]$ into four pairwise disjoint subsets.

\begin{definition}
 A matrix ${\mathbf A} = \{a_{ij}\}_{i,j=1}^n$ is called {\it J-sign-symmetric (JS)} if
$$a_{ij} \geq 0 \quad \mbox{on} \quad (J \times J)\cup (J^c \times J^c);$$
and
$$a_{ij} \leq 0 \quad  \mbox{on} \quad (J \times J^c)\cup (J^c \times J).$$
A matrix ${\mathbf A} = \{a_{ij}\}_{i,j=1}^n$ is called {\it strictly J-sign-symmetric (SJS)} if
$$a_{ij} > 0 \quad \mbox{on} \quad (J \times J)\cup (J^c \times J^c);$$
and
$$a_{ij} < 0 \quad  \mbox{on} \quad (J \times J^c)\cup (J^c \times J).$$
\end{definition}

Let us recall the following properties of JS matrices (see, for example, \cite{KU}).
\begin{enumerate}
\item[\rm 1.] A matrix ${\mathbf A}$ is JS (SJS) if and only if ${\mathbf A}$ can be represented  as follows:
\begin{equation}\label{1}{\mathbf A} = {\mathbf D} \widetilde{{\mathbf A}} {\mathbf
D}^{-1},
\end{equation}
 where $\widetilde{{\mathbf A}}$ is a
nonnegative (respectively, positive) matrix, ${\mathbf D}$ is a nonsingular diagonal matrix.

\item[\rm 2.] The spectral radius
$\rho(A)$ of an SJS matrix $\mathbf A$ is a simple positive eigenvalue of $\mathbf A$,
strictly bigger than the absolute value of any other eigenvalue of $\mathbf A$. The eigenvector $x_1$ and the eigenfunctional $x_1^*$, corresponding to the eigenvalue $\lambda_1 = \rho(A)$ may be chosen to satisfy the inequalities ${\mathbf D}x_1 > 0$, ${\mathbf D}x_1^* > 0$ entrywise, where $\mathbf D$ is an invertible diagonal matrix from Equality \eqref{1}.
\end{enumerate}
The proof of Property 2 immediately follows from the Perron theorem (see, for example, \cite{BERPL}, p. 27, Theorem 1.4).

Now let us examine a more general class of matrices, which includes eventually nonnegative ones.

\begin{definition}
A real $n \times n$ matrix ${\mathbf A}$ is called {\it eventually strictly J--sign-symmetric (ESJS)}
if there exists a positive integer $k_0$ such that ${\mathbf A}^k$ is SJS for all $k \geq k_0$.
A real $n \times n$ matrix ${\mathbf A}$ is called {\it eventually J--sign-symmetric (EJS)}
if there exists a positive integer $k_0$ such that ${\mathbf A}^k$ is JS for all $k \geq k_0$.
\end{definition}

Let us recall that the {\it sign pattern} ${\rm Sign}({\mathbf A})$ of a real $n \times n$ matrix ${\mathbf A} = \{a_{ij}\}_{i,j = 1}^n$ is defined by the equalities:
$${\rm Sign}({\mathbf A}) = \{s_{ij}\}_{i,j = 1}^n,$$
where $s_{ij} = {\rm sgn}(a_{ij})$ (see, for example, \cite{EHT}).

The following property of ESJS matrices holds.

\begin{lemma}
A matrix $\mathbf A$ is ESJS if and only if ${\mathbf A} = {\mathbf D} \widetilde{{\mathbf A}} {\mathbf
D}^{-1}$, where $\widetilde{{\mathbf A}}$ is an
EP matrix, ${\mathbf D}$ is a nonsingular diagonal matrix.
\end{lemma}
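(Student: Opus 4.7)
The easy direction $(\Leftarrow)$ follows directly from Property 1 of SJS matrices. If $\mathbf A = \mathbf D \widetilde{\mathbf A} \mathbf D^{-1}$ with $\widetilde{\mathbf A}$ EP, then $\mathbf A^k = \mathbf D \widetilde{\mathbf A}^k \mathbf D^{-1}$. For $k$ at least the power index $k_0$ of $\widetilde{\mathbf A}$, the inner factor $\widetilde{\mathbf A}^k$ is positive, and Property 1 instantly gives that $\mathbf A^k$ is SJS. So the substantive task is the converse.

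For $(\Rightarrow)$, assume $\mathbf A^k$ is SJS for every $k \geq k_0$. Property 1 provides, for each such $k$, a decomposition $\mathbf A^k = \mathbf D_k \mathbf B_k \mathbf D_k^{-1}$ with $\mathbf B_k > 0$ and $\mathbf D_k$ nonsingular diagonal, but a priori these diagonals are not compatible across powers. The plan is to show that they can all be chosen with one and the same sign pattern, namely the signature of the dominant eigenvector of $\mathbf A$. Then a single $\pm 1$ diagonal $\mathbf D$ will simultaneously conjugate every $\mathbf A^k$ ($k\geq k_0$) into a positive matrix, which is exactly eventual positivity.

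The first step is to extract a good eigenvector. By Property 2 applied to the SJS matrix $\mathbf A^{k_0}$, the value $\rho(\mathbf A^{k_0})$ is a simple, strictly dominant, positive eigenvalue of $\mathbf A^{k_0}$. Writing it as $\lambda_1^{k_0}$ for some eigenvalue $\lambda_1$ of $\mathbf A$, strict dominance of $\lambda_1^{k_0}$ together with the analogous statement for $\mathbf A^{k_0+1}$ (also SJS) forces $\lambda_1$ itself to be a simple, positive, strictly dominant eigenvalue of $\mathbf A$. Let $x_1$ be the associated eigenvector; it is shared by $\mathbf A$ and every $\mathbf A^k$. Property 2 further guarantees that $\mathbf D_k x_1 > 0$ componentwise, which both forces $x_1$ to have no zero coordinate and pins down $\operatorname{sgn}((\mathbf D_k)_{ii})=\operatorname{sgn}(x_1^i)$ for all $i$ and all $k\geq k_0$.

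Define the $\pm 1$ diagonal $\mathbf D$ by $\mathbf D_{ii} = \operatorname{sgn}(x_1^i)$, and set $\widetilde{\mathbf A} = \mathbf D^{-1} \mathbf A \mathbf D$. Then for each $k\geq k_0$,
\[
\widetilde{\mathbf A}^k \;=\; \mathbf D^{-1}\mathbf A^k \mathbf D \;=\; (\mathbf D^{-1}\mathbf D_k)\,\mathbf B_k\,(\mathbf D_k^{-1}\mathbf D).
\]
By the sign-matching above, $\mathbf D^{-1}\mathbf D_k$ is a \emph{positive} diagonal matrix, so $\widetilde{\mathbf A}^k$ is a positive-diagonal conjugate of the positive matrix $\mathbf B_k$ and hence positive. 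Thus $\widetilde{\mathbf A}$ is EP with power index at most $k_0$, and $\mathbf A = \mathbf D \widetilde{\mathbf A}\mathbf D^{-1}$, completing the proof. The main obstacle is the compatibility of the diagonal matrices $\mathbf D_k$ across different powers; everything else is routine bookkeeping, and this compatibility is exactly what Property 2 supplies once one notices that all the powers share the same Perron eigenvector.
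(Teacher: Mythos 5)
Your proof is correct, and its overall skeleton matches the paper's: both directions hinge on Property 1 for $(\Leftarrow)$, and for $(\Rightarrow)$ both extract the simple strictly dominant eigenvalue $\lambda_1$ of $\mathbf A$ from Property 2 applied to $\mathbf A^{k_0}$ and $\mathbf A^{k_0+1}$, and both build the conjugating matrix as $\mathbf D = \mathrm{diag}[\mathrm{sgn}(x_1^1),\ldots,\mathrm{sgn}(x_1^n)]$. Where you genuinely diverge is in the final verification that $\mathbf D^{-1}\mathbf A^k\mathbf D$ is positive. The paper invokes Lemma~1 (the asymptotic $\rho(A)^{-k}\mathbf A^k \to x_1\otimes x_1^*$), deduces that ${\rm Sign}(\mathbf A^k) = {\rm Sign}(x_1\otimes x_1^*)$ for all $k\geq k_1$, and checks that conjugation by $\mathbf D$ makes that sign pattern positive; this only yields positivity from some possibly larger index $k_1$, and it also requires handling the eigenfunctional $x_1^*$ and the equality ${\rm Sign}(x_1)={\rm Sign}(x_1^*)$. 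You instead work power by power: each $\mathbf A^k$ ($k\geq k_0$) has its own decomposition $\mathbf D_k\mathbf B_k\mathbf D_k^{-1}$, and Property 2 pins the sign pattern of every $\mathbf D_k$ to that of the shared Perron eigenvector $x_1$, so $\mathbf D^{-1}\mathbf D_k$ is a (positive or negative) scalar-sign multiple of a positive diagonal matrix and the conjugate $\widetilde{\mathbf A}^k=(\mathbf D^{-1}\mathbf D_k)\mathbf B_k(\mathbf D_k^{-1}\mathbf D)$ is positive either way. This is more elementary (no limiting argument), never touches the eigenfunctional, and gives the sharper conclusion that the power index of $\widetilde{\mathbf A}$ is at most $k_0$. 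The only nitpick is that Property 2 determines $\mathbf D_k$ only up to a global sign, so you should say $\mathrm{sgn}((\mathbf D_k)_{ii})=\varepsilon_k\,\mathrm{sgn}(x_1^i)$ with $\varepsilon_k=\pm1$; since conjugation by $-\mathbf E$ equals conjugation by $\mathbf E$, this does not affect the conclusion.
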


\begin{proof} $\Leftarrow$ Let ${\mathbf A} = {\mathbf D} \widetilde{{\mathbf A}} {\mathbf
D}^{-1}$, where $\widetilde{{\mathbf A}}$ is an EP matrix. Then there is a positive integer $k_0$ such that $(\widetilde{{\mathbf A}})^k$ is positive for all $k \geq k_0$. Examine the equalities
$${\mathbf A}^k = ({\mathbf D} \widetilde{{\mathbf A}} {\mathbf
D}^{-1})^k = {\mathbf D} (\widetilde{{\mathbf A}})^k {\mathbf
D}^{-1}.$$ Since $(\widetilde{{\mathbf A}})^k$ is positive, we apply Property 1 of SJS matrices and obtain that ${\mathbf A}^k$ is SJS for all $k \geq k_0$.

$\Rightarrow$ Let $\mathbf A$ be ESJS. Since ${\mathbf A}^{k_0}$ is SJS for some positive integer $k_0$, we have by Property 2 of SJS matrices that $\rho(A^{k_0})$ is a positive simple strictly dominant eigenvalue of ${\mathbf A}^{k_0}$. Since all the eigenvalues of ${\mathbf A}^{k_0}$ are powers of the eigenvalues of $\mathbf A$, we conclude that there is an eigenvalue $\lambda$ of $\mathbf A$ such that $\lambda^{k_0} = \rho(A^{k_0}) > 0$. Since $\rho(A^{k_0})$ is simple and strictly dominant we have that $\lambda$ is also simple and strictly dominant. Since $\mathbf A$ is real, we have that $\lambda$ is also real (positive or negative). Applying the same reasoning to ${\mathbf A}^{k_0 + 1}$ and taking into account that either $k_0$ or $k_0 + 1$ must be odd, we conclude that $\lambda > 0$. Thus the conditions of Lemma 1 hold. Applying Lemma 1, we obtain that $\frac{1}{\rho(A)^k}{\mathbf A}^k \rightarrow x_1 \otimes x_1^*$ as $k \rightarrow \infty$, where $x_1$ and $x_1^*$ are the eigenvector and the eigenfunctional corresponding to the simple strictly dominant eigenvalue $\lambda$. Thus there is a positive integer $k_1$ such that ${\rm Sign}({\mathbf A}^k) = {\rm Sign}(x_1 \otimes x_1^*)$ for all $k \geq k_1$.

Since ${\mathbf A}^k$ is SJS starting from $k = k_0$ and $\mathbf A$ and ${\mathbf A}^k$ share the same eigenvectors we have by Property 2 of SJS matrices that ${\mathbf D}x_1 >0$ and ${\mathbf D}x_1^* >0$ for some invertible diagonal matrix $\mathbf D$. This implies the equality ${\rm Sign}(x_1) = {\rm Sign}(x_1^*)$. Examine ${\mathbf D} = {\rm diag}[{\rm sgn}(x_1^1), \ {\rm sgn}(x_1^2), \ \ldots, \ {\rm sgn}(x_1^n)]$ and put $\widetilde{{\mathbf A}} = {\mathbf D}^{-1}{\mathbf A}{\mathbf D}$. In this case, it is easy to see that the matrix ${\mathbf D}^{-1}{\rm Sign}(x_1 \otimes x_1^*){\mathbf D}$ is positive. So we obtain
$${\rm Sign}({\mathbf D}^{-1}{\mathbf A}{\mathbf D})^k =  {\mathbf D}^{-1}{\rm Sign}({\mathbf A}^k){\mathbf D} = {\mathbf D}^{-1}{\rm Sign}(x_1 \otimes x_1^*){\mathbf D}$$ for $k \geq k_1$. This equality shows that $({\mathbf D}^{-1}{\mathbf A}{\mathbf D})^k$ is positive for all $k \geq  k_1$. Thus ${\mathbf A} = {\mathbf D} \widetilde{{\mathbf A}} {\mathbf
D}^{-1}$, where $\widetilde{{\mathbf A}} = {\mathbf D}^{-1}{\mathbf A}{\mathbf D}$ is an EP matrix. \qed
\end{proof}

The following statement generalizes results of Johnson and Tarazaga.
\begin{theorem}
 Let ${\mathbf A} \in {\mathbb R}^{n \times n}$. Then the following statements are equivalent.
\begin{enumerate}
\item[\rm 1.] The matrix ${\mathbf A}$ has the signature equality property.
\item[\rm 2.] The matrix ${\mathbf A}$ is eventually SJS.
\item[\rm 3.] There is a positive integer $k$ such that both ${\mathbf A}^k$ and ${\mathbf A}^{k+1}$ are SJS.
\end{enumerate}
\end{theorem}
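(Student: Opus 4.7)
My plan is to mirror the classical Johnson--Tarazaga equivalence, reducing the SJS case to the positive case via the diagonal similarity guaranteed by Lemma 2. The three implications have roughly parallel proofs to Johnson--Tarazaga, but one must be careful about signs. I would handle the three implications in the cycle $(2)\Rightarrow(3)\Rightarrow(1)\Rightarrow(2)$.

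The implication $(2)\Rightarrow(3)$ is immediate from the definition of ESJS: if $\mathbf{A}^k$ is SJS for all $k\geq k_0$, take $k=k_0$.

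For $(3)\Rightarrow(1)$, I would apply Property 2 of SJS matrices to both $\mathbf{A}^k$ and $\mathbf{A}^{k+1}$. From $\mathbf{A}^k$ SJS, $\rho(\mathbf{A}^k)$ is a simple strictly dominant positive eigenvalue; since the eigenvalues of $\mathbf{A}^k$ are $k$-th powers of those of $\mathbf{A}$, there is a simple strictly dominant eigenvalue $\lambda$ of $\mathbf{A}$ with $\lambda^k=\rho(\mathbf{A}^k)$ (as in the proof of Lemma 2). Reality of $\lambda$ follows because $\mathbf{A}$ is real and $\lambda$ is simple with $\lambda^k>0$. Applying the same argument to $\mathbf{A}^{k+1}$ and noting that one of $k,k+1$ is odd forces $\lambda>0$. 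The eigenvector $x_1$ and eigenfunctional $x_1^*$ of $\mathbf{A}$ for $\lambda$ are also those of $\mathbf{A}^k$ for $\lambda^k$; by Property 2 there is an invertible diagonal matrix $\mathbf{D}$ with $\mathbf{D}x_1>0$ and $\mathbf{D}x_1^*>0$. In particular neither vector has a zero coordinate, and comparing signs coordinate-wise yields ${\rm Sign}(x_1)={\rm Sign}(x_1^*)$. This is the signature equality property.

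For $(1)\Rightarrow(2)$, I would set $\mathbf{D}={\rm diag}[{\rm sgn}(x_1^1),\ldots,{\rm sgn}(x_1^n)]$, which is well defined because $x_1$ has no zero coordinate. Since ${\rm Sign}(x_1)={\rm Sign}(x_1^*)$ and $\mathbf{D}^{-1}=\mathbf{D}$, both $\mathbf{D}^{-1}x_1$ and $\mathbf{D}x_1^*$ are positive entrywise. These are, respectively, the eigenvector and eigenfunctional of $\widetilde{\mathbf{A}}:=\mathbf{D}^{-1}\mathbf{A}\mathbf{D}$ corresponding to the simple strictly dominant positive eigenvalue $\lambda$. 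Hence both $\widetilde{\mathbf{A}}$ and $\widetilde{\mathbf{A}}^T$ possess the strong Perron--Frobenius property, so by the Johnson--Tarazaga theorem $\widetilde{\mathbf{A}}$ is EP. The easy direction of Lemma 2 then gives that $\mathbf{A}=\mathbf{D}\widetilde{\mathbf{A}}\mathbf{D}^{-1}$ is ESJS.

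The main obstacle is the careful sign bookkeeping in $(3)\Rightarrow(1)$: showing $\lambda>0$ (rather than merely $\lambda\in\mathbb{R}$) genuinely requires the two consecutive powers, and extracting signature equality rather than coordinate-wise positivity requires translating the SJS-to-positive similarity of Property 2 into a statement about the signs of $x_1$ and $x_1^*$ themselves. Once these sign issues are in place, the rest is essentially the Johnson--Tarazaga argument after a diagonal conjugation.
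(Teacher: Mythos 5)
Your proof is correct. The implications $(3)\Rightarrow(1)$ and (trivially) $(2)\Rightarrow(3)$ match the paper's argument --- indeed your $(2)\Rightarrow(3)$ is cleaner, since the paper takes a detour through Lemma 2 to show that the sign patterns of ${\mathbf A}^{k_0}$ and ${\mathbf A}^{k_0+1}$ coincide, whereas the definition of ESJS already yields two consecutive SJS powers directly. The genuine difference is in $(1)\Rightarrow(2)$: the paper applies Lemma 1 directly to conclude that ${\rm Sign}({\mathbf A}^k)={\rm Sign}(x_1\otimes x_1^*)$ for large $k$, and then checks from the partition $J=\{i: x_1^i>0\}$ that the rank-one limit $x_1\otimes x_1^*$ is SJS; you instead conjugate by ${\mathbf D}={\rm diag}[{\rm sgn}(x_1^1),\ldots,{\rm sgn}(x_1^n)]$, verify the strong Perron--Frobenius property for $\widetilde{\mathbf A}={\mathbf D}^{-1}{\mathbf A}{\mathbf D}$ and its transpose, invoke the Johnson--Tarazaga theorem (Theorem 1) to get eventual positivity, and finish with the easy direction of Lemma 2. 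Your route is a clean reduction to the known positive case and reuses the cited machinery (it is essentially the same conjugation the paper uses inside the proof of Lemma 2), at the cost of relying on Theorem 1 as a black box; the paper's route is self-contained modulo Lemma 1 and exhibits the limiting SJS sign pattern explicitly. Both are sound.
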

\begin{proof}
${\rm (1)} \Rightarrow {\rm (2)}$
Suppose (1). Let $\mathbf A$ has a unique positive simple strictly dominant eigenvalue $\lambda_1 > 0$ with the corresponding eigenvector $x_1$ and eigenfunctional $x_1^*$. Let, in addition, both $x_1$ and $x_1^*$ have no zero coordinates, and the equality ${\rm Sign}(x_1) = {\rm Sign}(x_1^*)$ holds. Applying Lemma 1, we obtain that there is a positive integer $k_0$ such that $${\rm Sign}({\mathbf A}^k) = {\rm Sign}(x_1 \otimes x_1^*)$$ for all $k \geq k_0$.

Let us organize a partition of $[n]$ as follows. We put $J := \{i \in [n]: x_1^i > 0\}$. In this case,
$J^c = [n]\setminus J = \{i \in [n] : x_1^i < 0\}$. Since ${\rm sgn}(x_1^i) = {\rm sgn}((x_1^*)^i)$ for all $i = 1, \ \ldots, \ n$ and all of the coordinates are nonzero, we have
$$x_1^i(x_1^*)^j >0 \qquad \mbox{if} \quad i,j \in J \quad \mbox {or} \quad i,j \in J^c;$$
$$x_1^i(x_1^*)^j <0 \qquad \mbox{if} \quad i \in J, j \in J^c \quad \mbox{or} \quad i \in J^c, j \in J.$$
Thus the matrix $x_1 \otimes x_1^*$ is SJS according to Definition 6.

${\rm (2)} \Rightarrow {\rm (3)}$ Since $\mathbf A$ is eventually SJS, we obtain by Lemma 2 that ${\mathbf A} = {\mathbf D}\widetilde{{\mathbf A}}{\mathbf D}^{-1}$, where $\widetilde{{\mathbf A}}$ is an EP matrix, ${\mathbf D}$ is a nonsingular diagonal matrix. Thus there is a positive integer $k_0$ such that $\widetilde{{\mathbf A}}^k$ is positive for all $k \geq k_0$. Considering the equalities
$$ {\mathbf A}^{k_0} = {\mathbf D}\widetilde{{\mathbf A}}^{k_0}{\mathbf D}^{-1} $$
and
$$ {\mathbf A}^{k_0 + 1} = {\mathbf D}\widetilde{{\mathbf A}}^{k_0 + 1}{\mathbf D}^{-1} $$
we obtain that
$${\rm sgn}((a^{k_0})_{ij}) = {\rm sgn(d_{ii})}{\rm sgn}((\widetilde{a}^{k_0})_{ij}){\rm sgn}(\frac{1}{d_{jj}});$$
$${\rm sgn}((a^{k_0 + 1})_{ij}) = {\rm sgn(d_{ii})}{\rm sgn}((\widetilde{a}^{k_0 + 1})_{ij}){\rm sgn}(\frac{1}{d_{jj}}),$$
where $(a^k)_{ij}$, $(\widetilde{a}^k)_{ij}$ denote the entries of the matrices ${\mathbf A}^k$ and $\widetilde{{\mathbf A}}^k$, respectively, for $k = k_0, \ k_0 + 1$.
Since ${\rm sgn}((\widetilde{a}^{k_0})_{ij}) = {\rm sgn}((\widetilde{a}^{k_0 + 1})_{ij}) = + 1$ for all $i,j = 1, \ \ldots, \ n$, we have ${\rm Sign}({\mathbf A}^{k_0}) = {\rm Sign}({\mathbf A}^{k_0 + 1})$.

${\rm (3)} \Rightarrow {\rm (1)}$ Let both the matrices ${\mathbf A}^k$ and ${\mathbf A}^{k+1}$ are STJS for some positive integer $k$. Since ${\mathbf A}^k$ is SJS, $\rho(A^k)$ is an eigenvalue of ${\mathbf A}^k$ (by Property 2), and there must be an eigenvalue $\lambda$ of $\mathbf A$ such that
$\lambda^k = \rho(A^k)$. Since $\rho(A^k)$ is a positive simple strictly dominant eigenvalue of ${\mathbf A}^k$, we conclude that $\lambda$ is a real simple strictly dominant eigenvalue of $\mathbf A$ (positive or negative). Applying the same reasoning for ${\mathbf A}^{k+1}$ and taking into account that one of the integers $k$ and $k + 1$ must be odd, we conclude $\lambda > 0$.

 Applying again Property 2 to ${\mathbf A}^k$, we get that the corresponding to $\rho(A^k)$ eigenvector $x_1$ and eigenfunctional $x_1^*$ satisfy the equality ${\rm Sign}(x_1) = {\rm Sign}(x_1^*)$. Observing that $x_1$ and $x_1^*$ are also the eigenvector and the eigenfunctional of $\mathbf A$, corresponding to the eigenvalue $\lambda$ we complete the proof.
\qed
\end{proof}

\section{Eventually totally positive matrices}
Let us recall some definitions and notations of the exterior algebra.

\begin{definition}
Let $e_1, \ \ldots, \ e_n$ be an arbitrary basis in ${\mathbb R}^n$ and let $x_1, \ \ldots, \ x_j$ $ \ (2 \leq j \leq n)$ be any vectors in ${\mathbb R}^n$ defined by their coordinates: $x_i = (x_i^1, \ \ldots, \ x_i^n)$, $i = 1, \ \ldots, \ j$. Then the vector $x_1 \wedge \ldots \wedge x_j \in {\mathbb R}^{\binom{n}j}$ (here $\binom{n}j = \frac{n!}{j!(n-j)!}$) with coordinates of the form
$$(x_1 \wedge \ldots \wedge x_j)^{\alpha}:= \begin{vmatrix} x_1^{i_1} & \ldots & x_j^{i_1} \\
\ldots & \ldots & \ldots \\
x_1^{i_j} & \ldots & x_j^{i_j} \\
 \end{vmatrix},$$ where $\alpha$ is the number of the set of indices $(i_1, \ \ldots,  \ i_j) \subseteq [n]$ in the lexicographic ordering, is
called an {\it exterior product} of $x_1, \ \ldots, \
x_j$.
\end{definition}

We consider the $j$th exterior power $\wedge^j {\mathbb R}^n$ of the space ${\mathbb R}^n$ as the space ${\mathbb R}^{\binom{n}j}$. The set of all of exterior
products of the form $e_{i_1} \wedge \ldots \wedge e_{i_j}$, where
$1 \leq i_1 < \ldots < i_j \leq n$ forms a canonical basis in $\wedge^j {\mathbb R}^n$ (see \cite{GLALU}).

\begin{definition}
 Let $A: {\mathbb R}^n \rightarrow {\mathbb R}^n$ be a linear operator. Then its {\it $j$th exterior power} $\wedge^j A$ is defined as on operator on $\wedge^j
{\mathbb R}^n$ acting by the rule
$$ (\wedge^j A)(x_1 \wedge \ldots \wedge x_j) = Ax_1 \wedge \ldots \wedge Ax_j.$$
\end{definition}

If ${\mathbf A} =
\{a_{ij}\}_{i,j = 1}^n$ is the matrix of $A$ in a basis $e_1, \ \ldots, \ e_n$, then
the matrix of $\wedge^j A$ in the basis
$\{e_{i_1} \wedge \ldots \wedge e_{i_j}\}$, where $1 \leq i_1<
\ldots < i_j \leq n$, equals the $j$th compound matrix $
{\mathbf A}^{(j)}$ of the initial matrix ${\mathbf A}$. Here the $j$th compound matrix $
{\mathbf A}^{(j)}$ consists of all the minors of the $j$th order
$A\begin{pmatrix}
  i_1 &  \ldots & i_j \\
  k_1 & \ldots & k_j \end{pmatrix}$, where $1 \leq i_1<
\ldots < i_j \leq n, \ 1 \leq k_1< \ldots < k_j \leq n$, of the
initial $n \times n$ matrix ${\mathbf A}$, listed in the lexicographic order (see, for example,
\cite{PINK}).

It is easy to see that ${\mathbf A}^{(1)} = {\mathbf A}$ and ${\mathbf A}^{(n)}$ is one-dimensional and coincides with $\det ({\mathbf A})$.

The following properties of the compound matrices will be used later (see, for example, \cite{GANTKREI}).
\begin{enumerate}
\item[\rm 1.] Let ${\mathbf A}, \ {\mathbf B}$ be $n \times n$ matrices. Then $({\mathbf A}{\mathbf B})^{(j)} = {\mathbf A}^{(j)}{\mathbf B}^{(j)}$ for $j = 1, \ \ldots, \ n$ (the Cauchy--Binet formula).
\item[\rm 2.] The $j$-th compound of an invertible matrix is invertible and the following equality holds: $({\mathbf A}^{(j)})^{-1} = ({\mathbf A}^{-1})^{(j)}$, $j = 1, \ \ldots, \ n$ (the Jacobi formula).
\end{enumerate}

Let us recall the statement concerning the eigenvalues of the exterior
power of an operator (see, for example, \cite{PINK}, p. 132).

\begin{theorem}[Kronecker] Let $\{\lambda_{i}\}_{i = 1}^n$ be the set of
all eigenvalues of an $n \times n$ matrix $\mathbf A$, repeated according to multiplicity. Then all the possible products of the form $\{\lambda_{i_1} \ldots \lambda_{i_j} \}$, where $1
\leq i_1 < \ldots < i_j \leq n$, forms the set of all the possible eigenvalues
of the $j$th compound matrix ${\mathbf A}^{(j)}$, repeated
according to multiplicity. If $x_{i_1}, \ \ldots, \ x_{i_j}$ are linearly independent eigenvectors of $\mathbf A$, corresponding to the eigenvalues $\lambda_{i_1}, \ \ldots, \ \lambda_{i_j}$ $(1
\leq i_1 < \ldots < i_j \leq n)$ respectively, then their exterior product $x_{i_1} \wedge \ldots \wedge x_{i_j}$ is an eigenvector of ${\mathbf A}^{(j)}$, corresponding to the eigenvalue $\lambda_{i_1} \ldots \lambda_{i_j}$.
\end{theorem}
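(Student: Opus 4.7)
The plan is to split the theorem into the (easier) assertion about eigenvectors and the (harder) claim that the full multiset $\{\lambda_{i_1}\cdots\lambda_{i_j}:1\le i_1<\cdots<i_j\le n\}$ gives all eigenvalues of ${\mathbf A}^{(j)}$ with the correct multiplicities. The eigenvector assertion I would settle first, directly from the definition of $\wedge^j A$: if $Ax_{i_s}=\lambda_{i_s}x_{i_s}$ for $s=1,\ldots,j$, then multilinearity of the wedge product gives
$$(\wedge^j A)(x_{i_1}\wedge\cdots\wedge x_{i_j})=Ax_{i_1}\wedge\cdots\wedge Ax_{i_j}=\lambda_{i_1}\cdots\lambda_{i_j}\,(x_{i_1}\wedge\cdots\wedge x_{i_j}),$$
and linear independence of $x_{i_1},\ldots,x_{i_j}$ ensures that the wedge product is nonzero, hence a genuine eigenvector of ${\mathbf A}^{(j)}$.

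For the multiset claim, the plan is to reduce to an upper triangular matrix via Schur's theorem over ${\mathbb C}$: write ${\mathbf A}={\mathbf U}{\mathbf T}{\mathbf U}^{-1}$, where $\mathbf T$ is upper triangular with diagonal entries $\lambda_1,\ldots,\lambda_n$. Combining the Cauchy--Binet identity (Property 1) with the Jacobi identity (Property 2) of compound matrices yields
$${\mathbf A}^{(j)}={\mathbf U}^{(j)}{\mathbf T}^{(j)}({\mathbf U}^{(j)})^{-1},$$
so the spectra of ${\mathbf A}^{(j)}$ and ${\mathbf T}^{(j)}$ coincide with multiplicities, and it suffices to treat the upper triangular case.

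The main obstacle, and the combinatorial core of the argument, is to show that ${\mathbf T}^{(j)}$ is itself upper triangular in the lexicographic order on $j$-element index sets, with diagonal entries $T^{(j)}_{I,I}=\prod_{s=1}^j T_{i_s,i_s}=\lambda_{i_1}\cdots\lambda_{i_j}$. For the triangularity, given $I=\{i_1<\cdots<i_j\}$ and $K=\{k_1<\cdots<k_j\}$ with $I>_{\mathrm{lex}} K$, I would take $r$ to be the first index with $i_r>k_r$. Then for every $s\ge r$ and every $t\le r$ one has $i_s\ge i_r>k_r\ge k_t$, so $T_{i_s,k_t}=0$; this produces a zero block of size $(j-r+1)\times r$ in the submatrix with rows $I$ and columns $K$. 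Since $(j-r+1)+r>j$, every term in the Leibniz expansion of the minor vanishes (the permutation cannot avoid the zero block), so the $(I,K)$ entry of ${\mathbf T}^{(j)}$ is zero. The diagonal entries are principal minors of a triangular matrix and are therefore products of the corresponding diagonal entries of $\mathbf T$. Reading off the diagonal of the triangular ${\mathbf T}^{(j)}$ then gives exactly the $\binom{n}{j}$ products $\lambda_{i_1}\cdots\lambda_{i_j}$ with the correct multiplicities, completing the proof.
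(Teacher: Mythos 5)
Your proof is correct. Note that the paper itself offers no proof of this statement: it is quoted as a classical result (Kronecker's theorem, with a reference to Pinkus), so there is no argument of the paper's to compare against. Your two-part plan is sound and complete: the eigenvector assertion follows exactly as you say from multilinearity of the wedge and the fact that linearly independent vectors have nonzero exterior product, and the multiplicity claim is handled by the standard triangularization route. The one point worth making explicit is that the Schur decomposition must be taken over $\mathbb{C}$ (a real matrix need not be triangularizable over $\mathbb{R}$), so you should note that the compound matrix and the Cauchy--Binet and Jacobi identities extend verbatim to complex matrices; with that said, ${\mathbf A}^{(j)}$ and ${\mathbf T}^{(j)}$ are similar and have the same spectrum with multiplicities. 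Your combinatorial step is also right: if $I>_{\mathrm{lex}}K$ and $r$ is the first position with $i_r>k_r$, the submatrix $T[I,K]$ has a zero block of size $(j-r+1)\times r$, and since $(j-r+1)+r=j+1>j$ every permutation in the Leibniz expansion must hit that block, so the minor vanishes; the diagonal entries $T^{(j)}_{I,I}$ are determinants of upper triangular principal submatrices and hence equal $\lambda_{i_1}\cdots\lambda_{i_j}$. Reading off the diagonal of the triangular ${\mathbf T}^{(j)}$ then yields the asserted multiset of eigenvalues.
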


As usual, we denote $S^-(x)$ the number of sign changes in the sequence of the
coordinates $(x_1, \ \ldots, \ x_n)$ of the vector $x$ with zero
coordinates discarded, and $S^+(x)$ the  maximum number of sign changes in the sequence
$(x_1, \ \ldots, \ x_n)$, where zero coordinates are arbitrarily
assigned values $\pm 1$.

The following definition was given in \cite{GANTKREI}.
\begin{definition}
A system of nonzero vectors $\{x_1, \ \ldots, \ x_n\}$, $x_i \in {\mathbb R^n}$, is called a {\it Markov system} or {\it an oscillating system} if any linear combination $x = \sum\limits_{i=1}^jc_ix_i$ satisfies the inequality
$$S^+(x) \leq j-1, $$
whenever $1 \leq j \leq n$, $c_i \in {\mathbb R}$, $\sum\limits_{i=1}^jc_i^2 \neq 0$.
In particular, the $j$th vector $x_j$ in the oscillating system $\{x_1, \ \ldots, \ x_n\}$ has exactly $j-1$ changes of sign.
\end{definition}

\begin{definition}
A real $n \times n$ matrix ${\mathbf A}$ is said to have the {\it Gantmacher--Krein property} if
$\mathbf A$ has $n$ positive simple eigenvalues $\{\lambda_1, \ \ldots, \ \lambda_n\}$ with the
Markov systems of the corresponding eigenvectors $\{x_1, \ \ldots, \ x_n\}$.
\end{definition}

Let us denote GK the class of all matrices which possess the Gantmacher--Krein property together with their transposes.

The following lemma is proved in \cite{AN} (see \cite{AN}, p. 198, Lemma 5.1).

\begin{lemma}[Ando]
Let $\{x_1, \ \ldots, \ x_j\}$ be real vectors from ${\mathbb R}^n$ $( j < n)$. In order that $$S^+(\sum_{i=1}^jc_ix_i) \leq j-1, $$
whenever $c_i \in {\mathbb R}$, $\sum\limits_{i=1}^jc_i^2 \neq 0$, it is necessary and sufficient that
$x_1\wedge \ldots \wedge x_j$ be strictly positive or strictly negative.
\end{lemma}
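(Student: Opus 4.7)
The plan is to identify the coordinates of $x_1 \wedge \ldots \wedge x_j$ with the $j \times j$ minors of the $n \times j$ matrix ${\mathbf X} = (x_1 \mid \ldots \mid x_j)$ and to relate the sign-change count $S^+({\mathbf X}c)$ to these minors via Laplace expansion of a bordered $(j+1) \times (j+1)$ matrix. This converts a statement about sign changes in the column span of ${\mathbf X}$ into a statement about the signs of the coordinates of $x_1 \wedge \ldots \wedge x_j$.

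For sufficiency ($\Leftarrow$), I would assume every $j \times j$ minor of ${\mathbf X}$ is strictly positive (the strictly negative case is analogous). Fix $c \in {\mathbb R}^j$ with $\sum c_i^2 \neq 0$ and put $y = {\mathbf X}c$. Suppose for contradiction $S^+(y) \geq j$, and choose witness indices $k_0 < k_1 < \ldots < k_j$ and signs $\epsilon_i = (-1)^i$ (after a possible global sign flip) such that $\epsilon_i y^{k_i} \geq 0$ for each $i$ (with zero coordinates freely signed). The $(j+1) \times (j+1)$ submatrix of $({\mathbf X} \mid y)$ on rows $k_0, \ldots, k_j$ is singular because $y$ is a linear combination of the columns of ${\mathbf X}$. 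Laplace expansion along its last column yields
$$0 = \sum_{i=0}^{j} (-1)^{i+j} y^{k_i} M_i,$$
where $M_i$ is the $j \times j$ minor of ${\mathbf X}$ on rows $\{k_0, \ldots, k_j\} \setminus \{k_i\}$. By hypothesis every $M_i > 0$ and every $(-1)^{i+j} y^{k_i}$ shares a common sign, so each summand vanishes and hence $y^{k_0} = \ldots = y^{k_j} = 0$. A small generic perturbation of $c$ (keeping it nonzero but moving it off the measure-zero set where the witness coordinates all vanish) then produces the desired strict contradiction.

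For necessity ($\Rightarrow$), I would assume $S^+({\mathbf X}c) \leq j - 1$ for every nonzero $c$. First, no $j \times j$ minor of ${\mathbf X}$ can vanish: if the minor on rows $k_1 < \ldots < k_j$ were zero, the restriction of ${\mathbf X}$ to these rows would be singular, yielding a nonzero $c$ with $({\mathbf X}c)^{k_i} = 0$ for all $i$; assigning alternating signs $+,-,+,\ldots$ to these $j$ zero coordinates and combining with at least one nonzero coordinate of ${\mathbf X}c$ outside $\{k_1, \ldots, k_j\}$ gives $S^+({\mathbf X}c) \geq j$, a contradiction. Second, no two minors can have opposite strict signs: given minors with opposite signs, the Laplace relation above can be reversed to construct an explicit $c$ whose image ${\mathbf X}c$ exhibits at least $j$ sign changes, again contradicting the hypothesis.

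The main obstacle will be the careful bookkeeping of sign changes in the presence of zero coordinates of ${\mathbf X}c$. In the sufficiency direction, upgrading the Laplace conclusion ``$y^{k_i} = 0$ for all witness indices'' to a genuine contradiction requires either a perturbation of $c$ within ${\mathbb R}^j$ or a secondary Laplace argument on a different submatrix. In the necessity direction, the hardest step is confirming that the zero coordinates at $k_1, \ldots, k_j$ deliver $j$ rather than merely $j-1$ sign changes; this requires locating a suitable nonzero coordinate of ${\mathbf X}c$ outside the witness set and using it to extend the alternation. Because of $j < n$, at least one such ``outside'' row is available, which is why the hypothesis $j<n$ is essential.
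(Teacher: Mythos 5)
The paper itself offers no proof of this lemma --- it is quoted verbatim from Ando \cite{AN} --- so your attempt can only be measured against the standard determinantal argument, which is indeed the route you take: identify the coordinates of $x_1\wedge\ldots\wedge x_j$ with the $j\times j$ minors of ${\mathbf X}=(x_1\mid\ldots\mid x_j)$ and exploit the vanishing of the bordered $(j+1)\times(j+1)$ determinant. The sufficiency direction is essentially right, but your closing move is both unjustified and unnecessary. Unjustified, because after perturbing $c$ you have no guarantee that the new $y={\mathbf X}c$ still satisfies $S^+(y)\geq j$ with the same witness indices, so you cannot simply re-run the Laplace identity to extract a ``strict'' contradiction. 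Unnecessary, because the conclusion you already reached --- $y^{k_0}=\ldots=y^{k_j}=0$ --- finishes the proof on the spot: the rows $k_1,\ldots,k_j$ of ${\mathbf X}$ form a $j\times j$ matrix whose determinant is the minor $M_0>0$, so $({\mathbf X}c)^{k_i}=0$ for $i=1,\ldots,j$ forces $c=0$, contradicting $\sum c_i^2\neq 0$.

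The genuine gap is in the necessity direction, where the claim that ``no two minors can have opposite strict signs'' carries most of the weight of the lemma and is dispatched in a single sentence. To ``reverse the Laplace relation'' you need two missing ingredients: (i) a reduction to two row sets differing in exactly one index, obtained by walking from $\alpha$ to $\beta$ through $j$-subsets that swap one element at a time and observing (using the already-proved nonvanishing of all minors) that the sign must flip between two adjacent subsets along the walk; and (ii) for adjacent subsets with union $\{k_0<\ldots<k_j\}$, a proof that some $c$ makes $(({\mathbf X}c)^{k_0},\ldots,({\mathbf X}c)^{k_j})$ strictly alternate in sign. The image of $c\mapsto(({\mathbf X}c)^{k_0},\ldots,({\mathbf X}c)^{k_j})$ is exactly the hyperplane $\sum_{i=0}^{j}(-1)^iw^iM_i=0$, where $M_i$ is the minor omitting row $k_i$; this hyperplane meets the open orthant $\{(-1)^iw^i>0\}$ precisely because the $M_i$ are not all of one sign, and any point of the intersection yields $S^-({\mathbf X}c)\geq j$, the desired contradiction. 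Without (i) and (ii) the necessity half is not a proof. (Your first necessity step, that no minor vanishes, is fine, modulo two routine points you should make explicit: the degenerate case ${\mathbf X}c=0$, where all $n\geq j+1$ coordinates may be signed freely, and the fact that the alternating assignment on the $j$ zero coordinates must be chosen adapted to the position and sign of the additional coordinate, rather than fixed as $+,-,+,\ldots$ in advance.)
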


Now we prove the following result.

\begin{lemma}
Let ${\mathbf A} \in {\mathbb R}^{n \times n}$. Then the following are equivalent.
\begin{enumerate}
\item[\rm 1.] The matrix ${\mathbf A}$ has the Gantmacher--Krein property.
\item[\rm 2.] The $j$th compound matrix ${\mathbf A}^{(j)}$ has the strong Perron--Frobenius property for all $j = 1, \ \ldots, \ n$.
\end{enumerate}
\end{lemma}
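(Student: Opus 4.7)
The strategy is to use Kronecker's theorem as the dictionary between spectral data of $\mathbf{A}$ and of its compounds $\mathbf{A}^{(j)}$, and Ando's lemma as the dictionary between positivity of exterior products of eigenvectors and the Markov (oscillation) inequality $S^{+}(\sum c_i x_i)\le j-1$.

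For $(1)\Rightarrow(2)$, order the GK eigenvalues $\lambda_1>\lambda_2>\cdots>\lambda_n>0$. By Kronecker, the spectrum of $\mathbf{A}^{(j)}$ consists of the $\binom{n}{j}$ products $\lambda_{i_1}\cdots\lambda_{i_j}$ with $1\le i_1<\cdots<i_j\le n$. A simple index-swap argument (if $(i_1,\ldots,i_j)\ne(1,\ldots,j)$ and $k$ is the first index where they differ, replace $i_k$ by $k$) shows that $\lambda_1\lambda_2\cdots\lambda_j$ strictly exceeds every other product, so it is a positive simple strictly dominant eigenvalue of $\mathbf{A}^{(j)}$. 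Kronecker also identifies the corresponding eigenvector as $x_1\wedge\cdots\wedge x_j$, and since $\{x_1,\ldots,x_n\}$ is a Markov system, Ando's lemma applied to the first $j$ vectors yields that $x_1\wedge\cdots\wedge x_j$ is strictly of one sign, giving the positive eigenvector required by strong Perron--Frobenius (after possibly replacing $x_j$ by $-x_j$).

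For $(2)\Rightarrow(1)$, I prove by induction on $j$ that $\mathbf{A}$ admits an ordering $\lambda_1>\lambda_2>\cdots>\lambda_j$ of real positive simple eigenvalues with $|\lambda_i|<\lambda_j$ for all $i>j$. The base $j=1$ is the strong Perron--Frobenius property of $\mathbf{A}^{(1)}=\mathbf{A}$. For the inductive step, strong Perron--Frobenius applied to $\mathbf{A}^{(j+1)}$ together with Kronecker forces the spectral radius to be $\lambda_1\cdots\lambda_j\cdot\mu$, where $\mu$ is an eigenvalue of $\mathbf{A}$ of maximal absolute value among those outside $\{\lambda_1,\ldots,\lambda_j\}$; simplicity of that Perron eigenvalue rules out $\mu$ being non-simple or sharing its absolute value with another outside eigenvalue (which in turn kills complex conjugate pairs and sign-flipped real pairs), positivity forces $\mu>0$, and strict dominance gives $|\lambda_i|<\mu$ for the remaining $i$. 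Setting $\lambda_{j+1}:=\mu$ completes the induction, and at $j=n$ we obtain $\lambda_1>\cdots>\lambda_n>0$.

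For the Markov system, for each $j$ the positive eigenvector of $\mathbf{A}^{(j)}$ corresponding to $\lambda_1\cdots\lambda_j$ must, by Kronecker and simplicity of that eigenvalue, be a nonzero scalar multiple of $x_1\wedge\cdots\wedge x_j$; hence $x_1\wedge\cdots\wedge x_j$ is strictly of one sign, and Ando's lemma yields $S^{+}\bigl(\sum_{i=1}^{j} c_i x_i\bigr)\le j-1$ for every nonzero choice of coefficients, which is exactly the Markov condition. The main obstacle is the bookkeeping in the inductive step of $(2)\Rightarrow(1)$: ensuring that each of the alternative possibilities for $\lambda_{j+1}$ (complex, negative, repeated, or tied in modulus with another outside eigenvalue) really does contradict one of the three clauses of strong Perron--Frobenius for $\mathbf{A}^{(j+1)}$; once handled, everything else is a direct application of Kronecker and Ando.
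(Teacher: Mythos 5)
Your proof is correct and follows essentially the same route as the paper: Kronecker's theorem identifies $\lambda_1\cdots\lambda_j$ as the simple strictly dominant eigenvalue of $\mathbf{A}^{(j)}$ with eigenvector $x_1\wedge\cdots\wedge x_j$, and Ando's lemma translates between strict one-signedness of that exterior product and the Markov condition, while the converse direction recovers $\lambda_j = \rho(\mathbf{A}^{(j)})/\rho(\mathbf{A}^{(j-1)})>0$ inductively. Your inductive bookkeeping in $(2)\Rightarrow(1)$ is somewhat more explicit than the paper's (which defers to the Gantmacher--Krein argument), but the substance is identical.
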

\begin{proof}
$(1) \Rightarrow (2).$ Assume that $\mathbf A$ has the Gantmakher--Krein property, i.e. $\mathbf A$ has $n$ positive simple eigenvalues $\{\lambda_1, \ \ldots, \ \lambda_n\}$, $\lambda_1 > \lambda_2 > \ldots > \lambda_n > 0$, and the corresponding eigenvectors $\{x_1, \ \ldots, \ x_n\}$ form a Markov system. Let us consider the $j$th compound matrix ${\mathbf A}^{(j)}$. Applying the Kronecker theorem (Theorem 4) to ${\mathbf A}^{(j)}$ we obtain that $\rho(A^{(j)}) = \lambda_1 \ldots \lambda_j$ is a positive simple strictly dominant eigenvalue of ${\mathbf A}^{(j)}$ with the corresponding eigenvector $x_1 \wedge \ldots \wedge x_j$. Since the eigenvectors $x_1, \ \ldots, \ x_n$ form a Markov system, any linear combination $x = \sum\limits_{i=1}^jc_ix_i$ satisfies the inequality
$$S^+(x) \leq j-1, $$
whenever $1 \leq j \leq n$, $c_i \in {\mathbb R}$, $\sum\limits_{i=1}^jc_i^2 \neq 0$. Thus, applying Ando's lemma (Lemma 3), we get that the eigenvector $x_1 \wedge \ldots \wedge x_j$ may be chosen to be strictly positive.

$(2) \Rightarrow (1).$ The proof of the statement that all the eigenvalues $\lambda_1, \ \ldots, \ \lambda_n$ of $\mathbf A$ are positive, simple and different in absolute value of each other literarily repeats the corresponding reasoning of Gantmakher and Krein (see \cite{GANTKREI}, or \cite{PINK}, p. 130, the proof of Theorem 5.3) Let us list the eigenvalues of the matrix $\mathbf A$
in descending order of their absolute values (taking into account their
multiplicities):
$$|\lambda_{1}| \geq | \lambda_{2}| \geq |\lambda_{3}| \geq \ldots
\geq |\lambda_{n}|.$$

Since the matrix ${\mathbf A}$ has the strong Perron--Frobenius property,
we get: $\lambda_{1} = \rho({\mathbf A})>0$ is a simple positive eigenvalue
of ${\mathbf A}$, different in absolute value from the remaining eigenvalues.
Examine the second compound matrix ${\mathbf A}^{(2)}$ which also has the strong Perron--Frobenius property we get: $\rho({\mathbf A}^{(2)}) > 0$ is
a simple positive eigenvalue of ${\mathbf A}^{(2)}$, different in absolute value from the remaining eigenvalues. As it follows from the statement of the Kronecker theorem, ${\mathbf A}^{(2)}$ has no other eigenvalues, except all the possible products of the form $\lambda_{i_1}\lambda_{i_2}$ where $1 \leq i_1 < i_2 \leq n$. Therefore $\rho({\mathbf A}^{(2)})>0$ can be represented in the
form of the product $\lambda_{i_1}\lambda_{i_2}$ with some values of
the indices $i_1,i_2$, \ $i_1 < i_2$. The facts that the
eigenvalues are listed in a descending order and there is only
one eigenvalue on the spectral circle $|\lambda| = \rho({\mathbf A})$ imply that
 $\rho({\mathbf A}^{(2)}) =  \lambda_{1}\lambda_{2} = \rho({\mathbf A})\lambda_{2}$.
  Therefore $\lambda_{2} = \frac{\rho({\mathbf A}^{(2)})}{\rho({\mathbf A})}>0$.

Repeating the same reasoning for ${\mathbf A}^{(j)}$, $j = 3, \ \ldots, \ n$, we obtain the relations:
$$\lambda_{j} = \frac{\rho({\mathbf A}^{(j)})}{\rho({\mathbf A}^{(j-1)})}>0,$$
where $j = 3, \ \ldots, \ n$. The simplicity of the eigenvalues $\lambda_j$ for every $j$ also follows from the above relations and the simplicity of $\rho({\mathbf A}^{(j)})$.

Then, applying Ando's lemma (Lemma 3) to all the exterior products of the form $x_1 \wedge \ldots \wedge x_j$ (they are all positive), we obtain that the eigenvectors $\{x_1, \ \ldots, \ x_n\}$ form a Markov system.
\qed
\end{proof}

Let us recall some well-known definitions (see, for example, \cite{GANT}).

\begin{definition}
A real $n \times n$ matrix ${\mathbf A}$ is called {\it totally positive (TP)} if ${\mathbf A}$ is
nonnegative and its $j$th compound matrix ${\mathbf A}^{(j)}$ is also nonnegative for
all $j = 2, \ \ldots, \ n$.

A real $n \times n$ matrix ${\mathbf A}$ is called {\it strictly totally positive (STP)} if
${\mathbf A}$ is positive and its $j$th compound matrix ${\mathbf A}^{(j)}$ is also positive for
all $j = 2, \ \ldots, \ n$.
\end{definition}

\begin{definition}
A real $n \times n$ matrix ${\mathbf A}$ is called {\it oscillatory} if it is TP and there is
a positive integer $k$ such that ${\mathbf A}^k$ is STP.
\end{definition}
Obviously, every oscillatory matrix is STP.

The following statement holds for STP matrices (see \cite{GANTKREI} or \cite{PINK}, p. 130, Theorem 5.3)
\begin{theorem}[Gantmacher, Krein]
Let an $n \times n$ matrix ${\mathbf A}$ be STP. Then all the
eigenvalues of ${\mathbf A}$ are positive and simple:
$$\rho(A)= \lambda_1 > \lambda_2 > \ldots > \lambda_n > 0.$$
 The first eigenvector corresponding to the maximal
eigenvalue $\lambda_1$ is strictly positive and the $j$th
eigenvector $x_j$ corresponding to the $j$th in absolute value
eigenvalue $\lambda_j$ has exactly $j - 1$ changes of sign. Moreover, the following inequalities hold:
$$q-1 \leq S^-(\sum_{i=q}^p c_i x_i) \leq S^+(\sum_{i=q}^p c_i x_i) \leq p-1$$
for each $1 \leq q \leq p \leq n$ and $\sum\limits_{i=q}^p c_i^2 \neq 0$.
\end{theorem}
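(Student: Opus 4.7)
The strategy is to reduce to the machinery already in place. Since $\mathbf{A}$ is STP, every compound matrix $\mathbf{A}^{(j)}$ is entry-wise strictly positive by definition, so the classical Perron theorem shows that each $\mathbf{A}^{(j)}$ enjoys the strong Perron--Frobenius property. The same applies to $(\mathbf{A}^{(j)})^T = (\mathbf{A}^T)^{(j)}$, because $\mathbf{A}^T$ is also STP. Hence each $\mathbf{A}^{(j)}$ lies in the class $PF$, and Lemma 4 delivers at once that $\mathbf{A}$ has the Gantmacher--Krein property: positive simple eigenvalues together with a Markov system of eigenvectors $x_1, \ldots, x_n$. Inspecting the proof of $(2)\Rightarrow(1)$ in Lemma 4, the identification $\lambda_j = \rho(A^{(j)})/\rho(A^{(j-1)})$ obtained there, combined with the Kronecker theorem (so that $\rho(A^{(j)}) = \lambda_1 \cdots \lambda_j$), forces the strict descending chain $\lambda_1 > \lambda_2 > \cdots > \lambda_n > 0$.

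The positivity of $x_1$ is the strong Perron--Frobenius property of $\mathbf{A} = \mathbf{A}^{(1)}$ itself. The upper bound $S^+(\sum_{i=q}^p c_i x_i) \leq p - 1$ is a direct consequence of the Markov-system condition: a linear combination of $x_q, \ldots, x_p$ is a fortiori a linear combination of $x_1, \ldots, x_p$ with the first $q - 1$ coefficients set to zero, so the defining inequality with $j = p$ applies.

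The main obstacle is the lower bound $S^-(\sum_{i=q}^p c_i x_i) \geq q - 1$. I would obtain it by dualising through the transpose. Since $\mathbf{A}^T$ is also STP, the argument above produces a Markov system of eigenfunctionals $x_1^*, \ldots, x_n^*$ with $S^+(x_k^*) \leq k - 1$, or, equivalently by Ando's lemma, $x_1^* \wedge \cdots \wedge x_k^*$ strictly one-sign for each $k$. Normalising to the biorthogonality relations $\langle x_i, x_j^* \rangle = \delta_{ij}$ and writing $y = \sum_{i=q}^p c_i x_i$ with $c_q \neq 0$, one gets $\langle y, x_k^* \rangle = 0$ for $k < q$ and $\langle y, x_q^* \rangle \neq 0$. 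A standard Chebyshev-system counting argument then closes the case: if $y$ had at most $q - 2$ sign changes its coordinates would split into at most $q - 1$ sign-constant blocks, and the Chebyshev property of $\{x_1^*, \ldots, x_{q-1}^*\}$ (equivalent to the strict positivity of the associated exterior product) would allow one to build a linear combination $z = \sum_{k=1}^{q-1} a_k x_k^*$ whose sign matches that of $y$ on every block, giving $\langle y, z \rangle \neq 0$ against $\langle y, x_k^* \rangle = 0$ for $k < q$.

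Combining the two bounds produces the full sandwich $q - 1 \leq S^-(\sum c_i x_i) \leq S^+(\sum c_i x_i) \leq p - 1$; specialising to a single eigenvector ($p = q = j$, $c_j = 1$) pins down $S^-(x_j) = S^+(x_j) = j - 1$, i.e.\ the $j$-th eigenvector has exactly $j - 1$ sign changes, completing the proof.
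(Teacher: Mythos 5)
This theorem is not proved in the paper at all: it is quoted as a classical result, with pointers to Gantmacher--Krein and to Pinkus (Theorem 5.3). So there is no in-paper argument to compare against; judged on its own, your proposal is a correct reconstruction of the classical proof, and it has the virtue of running entirely on machinery the paper sets up. Each compound ${\mathbf A}^{(j)}$ is entrywise positive, so Perron's theorem gives the strong Perron--Frobenius property of all compounds of ${\mathbf A}$ and of ${\mathbf A}^T$; Lemma 4 together with the Kronecker theorem then yields the simple, positive, strictly decreasing spectrum via $\lambda_j=\rho({\mathbf A}^{(j)})/\rho({\mathbf A}^{(j-1)})$, and Ando's lemma yields the Markov systems of right and left eigenvectors. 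This use of Lemma 4 is not circular: although its proof says it ``repeats the reasoning of Gantmacher and Krein,'' the argument actually written there is self-contained and does not invoke Theorem 5. Your derivation of the upper bound $S^+\le p-1$ from the Markov-system definition, and of ``exactly $j-1$ sign changes'' from the two-sided estimate with $p=q=j$, are both correct. (Minor point: first replace $q$ by the smallest index with $c_i\neq 0$, which only strengthens the lower bound you need, so assuming $c_q\neq0$ is legitimate.)

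The one genuine gap is the step you call ``a standard Chebyshev-system counting argument.'' The existence of $z\in\mathrm{span}\{x_1^*,\dots,x_{q-1}^*\}$ with $y_iz_i\ge 0$ for all $i$ and $y_iz_i>0$ for some $i$ is exactly the technical heart of the lower bound, and it does not follow from Lemma 3 of the paper alone; it requires the classical determinantal construction. Concretely: choose separator indices $j_1<\dots<j_{q-2}$ between consecutive sign blocks of $y$, and define $z_i$ as the $(q-1)\times(q-1)$ minor of the matrix with columns $x_1^*,\dots,x_{q-1}^*$ taken on the rows $j_1,\dots,j_{q-2},i$ arranged in increasing order. Expansion along the row indexed by $i$ shows $z$ lies in the span; strict one-signedness of $x_1^*\wedge\cdots\wedge x_{q-1}^*$ shows $z$ vanishes exactly at the separators and alternates in sign strictly between them; choosing the overall sign to match the blocks of $y$, and choosing separators so that at least one nonzero entry of $y$ is not a separator (possible by a pigeonhole count), gives $\langle y,z\rangle>0$, contradicting $\langle y,x_k^*\rangle=0$ for $k<q$. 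With that lemma supplied, your proof is complete and is the standard route.
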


Now let us introduce the following generalization of the class of STP matrices.
\begin{definition}
A real $n \times n$ matrix ${\mathbf A}$ is called {\it eventually strictly totally positive (ESTP)} if there is a positive integer $k_0$ such that for all $k \geq k_0$ ${\mathbf A}^k$ is STP. Here the minimal value of $k_0$ is called the {\it power index} of a ESTP matrix ${\mathbf A}$.
\end{definition}
It follows from the given above definition that {\it an oscillatory matrix is ESTP}. But the class of ESTP matrices also includes matrices with some negative entries and some negative minors.

Note that {\it if ${\mathbf A}$ and ${\mathbf B}$ are ESTP (ETP) matrices and ${\mathbf A}{\mathbf B} = {\mathbf B}{\mathbf A}$, then ${\mathbf A}{\mathbf B}$ is also ESTP (ETP).} However, if ${\mathbf A}{\mathbf B} \neq {\mathbf B}{\mathbf A}$ the above property may not hold. Moreover, ${\mathbf A}{\mathbf B}$ might be ESTP (ETP) yet neither ${\mathbf A}$ nor ${\mathbf B}$ is so.

The following theorem was proved in \cite{GANTKREI}.
\begin{theorem} Let an $n \times n$ matrix $\mathbf A$ have $n$ different in absolute value nonzero eigenvalues $\lambda_1, \ \ldots, \ \lambda_n$:
$$|\lambda_1| > |\lambda_2| > \ldots > |\lambda_n| > 0.$$
and the eigenvectors $x_1, \ \ldots, \ x_n$ and $x_1^*, \ \ldots, \ x_n^*$ of ${\mathbf A}$ and ${\mathbf A}^T$, respectively, form two Markov systems. Then there is a positive integer $k$ such that ${\mathbf A}^k$ is strictly totally positive.
\end{theorem}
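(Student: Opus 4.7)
The plan is to reduce strict total positivity of some power ${\mathbf A}^k$ to the Johnson--Tarazaga criterion applied to each of the compound matrices ${\mathbf A}^{(j)}$, after first passing to a sufficiently high even power so that all eigenvalues become positive. To begin, I would note that the Markov systems $\{x_i\}$ and $\{x_i^*\}$ consist of real vectors, so by simplicity of the spectrum each $\lambda_i$ is real. Setting $k_1 = 2$, the matrix ${\mathbf B} := {\mathbf A}^2$ shares eigenvectors $\{x_i\}$ and $\{x_i^*\}$ with $\mathbf A$, and its eigenvalues
$$\lambda_1^2 > \lambda_2^2 > \cdots > \lambda_n^2 > 0$$
are simple, positive and strictly decreasing in absolute value (inherited from the strict ordering of $|\lambda_i|$). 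Consequently both $\mathbf B$ and ${\mathbf B}^T$ possess the Gantmacher--Krein property.

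Next I would apply Lemma 4 to $\mathbf B$ and separately to ${\mathbf B}^T$, using the standard identity $({\mathbf B}^T)^{(j)} = ({\mathbf B}^{(j)})^T$. This yields that, for every $j \in \{1, \ldots, n\}$, the compound matrix ${\mathbf B}^{(j)}$ and its transpose both enjoy the strong Perron--Frobenius property. The Johnson--Tarazaga theorem then guarantees that each ${\mathbf B}^{(j)}$ is eventually positive; pick $m_j$ so that $({\mathbf B}^{(j)})^m > 0$ for every $m \geq m_j$, and set $M := \max_{1 \leq j \leq n} m_j$. By the Cauchy--Binet formula, $({\mathbf B}^{(j)})^M = ({\mathbf B}^M)^{(j)} = ({\mathbf A}^{2M})^{(j)}$, so all $n$ compound matrices of ${\mathbf A}^{2M}$ are entrywise positive. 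This is precisely the definition of STP, and hence $k := 2M$ proves the theorem.

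The one subtle point is the sign of the eigenvalues: if $\lambda_1 \cdots \lambda_j < 0$ for some $j$, then ${\mathbf A}^{(j)}$ fails the strong Perron--Frobenius property (its dominant eigenvalue in absolute value is negative), and the Johnson--Tarazaga theorem cannot be invoked directly for $\mathbf A$. Squaring the matrix at the outset sidesteps this obstacle uniformly in $j$. The remaining step --- consolidating eventual positivity across the finitely many exterior powers into a single exponent --- is handled by taking the maximum $M$ of the power indices $m_j$, so no further estimate is needed.
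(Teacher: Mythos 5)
Your proof is correct. The paper itself gives no proof of this theorem (it is quoted from Gantmacher--Krein), but your argument is exactly the chain the paper uses later for Theorem 7, namely Gantmacher--Krein property $\Rightarrow$ strong Perron--Frobenius property of all compounds (Lemma 4) $\Rightarrow$ eventual positivity of all compounds (Theorem 1) $\Rightarrow$ a single power with all compounds positive (Cauchy--Binet), and your preliminary squaring to force positive eigenvalues is consistent with the paper's remark that $k$ must be even when some $\lambda_i$ are negative.
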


The proof of Theorem 6 given by Gantmacher and Krein implies that in the case when some of the eigenvalues $\lambda_i$, $i = 1, \ \ldots, \ n$, are negative, the value $k$ is necessarily even.

The following corollary concerns oscillatory matrices (see \cite{GANTKREI}).
\begin{corollary} If a totally nonnegative matrix $\mathbf A$ satisfies the following conditions
\begin{enumerate}
\item[\rm 1.] all the eigenvalues of $\mathbf A$ are positive and simple;
\item[\rm 2.] all the eigenvectors of $\mathbf A$ and ${\mathbf A}^T$ forms two Markov systems,
\end{enumerate}
then $\mathbf A$ is oscillatory.
\end{corollary}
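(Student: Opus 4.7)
The plan is to derive the conclusion essentially as an immediate specialization of Theorem 6, after checking that the hypotheses of that theorem are fulfilled under the conditions of the corollary.

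First I would unpack the hypotheses. By Definition 13, to show that $\mathbf A$ is oscillatory it suffices to verify that (a) $\mathbf A$ is TP, which is given, and (b) there exists a positive integer $k$ such that $\mathbf A^k$ is STP. So the whole content of the corollary is the existence of such an exponent $k$, and this is exactly the conclusion of Theorem 6.

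Next I would verify that the assumptions of Theorem 6 hold. Theorem 6 requires that $\mathbf A$ has $n$ nonzero eigenvalues differing pairwise in absolute value, and that the eigenvectors of $\mathbf A$ and of $\mathbf A^T$ form two Markov systems. The latter condition is precisely hypothesis (2) of the corollary, so nothing is to prove there. For the former, hypothesis (1) says that all eigenvalues of $\mathbf A$ are positive and simple; since they are $n$ simple (algebraic multiplicity one) positive numbers, they are pairwise distinct and, being positive, $|\lambda_i|=\lambda_i$, so after reordering we get $\lambda_1>\lambda_2>\cdots>\lambda_n>0$, which yields the strict inequalities $|\lambda_1|>|\lambda_2|>\cdots>|\lambda_n|>0$ required by Theorem 6.

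Having verified the hypotheses, I would invoke Theorem 6 to obtain a positive integer $k$ such that $\mathbf A^k$ is STP. Together with the assumption that $\mathbf A$ itself is totally nonnegative (i.e.\ TP), this is exactly the definition of an oscillatory matrix (Definition 13), finishing the proof. There is no real obstacle here: the corollary is a direct reading of Theorem 6 for the special case of positive spectrum, and one can additionally note (as the paper remarks just before the corollary) that in this situation the even-exponent subtlety from Gantmacher and Krein's proof does not arise, since positivity of the $\lambda_i$ removes any sign issue on $\lambda_i^k$.
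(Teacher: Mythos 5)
Your proof is correct and matches the paper's intent: the paper states this as an immediate corollary of Theorem 6 (citing Gantmacher--Krein, with no separate argument), and your verification that positive simple eigenvalues are pairwise distinct in absolute value, followed by the appeal to Definition 13, is exactly the reading intended.
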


Now let us prove the main result, which characterizes properties of ESTP matrices.

\begin{theorem}
Let ${\mathbf A} \in {\mathbb R}^{n \times n}$. Then the following statements are equivalent.
\begin{enumerate}
\item[\rm 1.] Both of the matrices ${\mathbf A}$ and ${\mathbf A}^T$ have the Gantmacher--Krein property.
\item[\rm 2.] For every $j$, $j = 1, \ \ldots, \ n$, both the $j$th compound matrix ${\mathbf A}^{(j)}$ and its transpose $({\mathbf A}^{(j)})^T$ have the strong Perron--Frobenius property.
\item[\rm 3.] For every $j$, $j = 1, \ \ldots, \ n$, the $j$th compound matrix ${\mathbf A}^{(j)}$ is eventually positive.
\item[\rm 4.] The matrix ${\mathbf A}$ is eventually strictly totally positive.
\end{enumerate}
\end{theorem}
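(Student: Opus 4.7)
The plan is to establish the four-way equivalence by chaining $(1)\Leftrightarrow(2)\Leftrightarrow(3)\Leftrightarrow(4)$, with each link reducing to a result already in hand.

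For $(1)\Leftrightarrow(2)$, I would apply Lemma 4 twice. Applied to $\mathbf A$ itself, it yields that $\mathbf A$ has the Gantmacher--Krein property if and only if each compound $\mathbf A^{(j)}$, $j=1,\ldots,n$, has the strong Perron--Frobenius property. Applied to $\mathbf A^T$, together with the standard identity $(\mathbf A^T)^{(j)} = (\mathbf A^{(j)})^T$ (which holds because the minor of $\mathbf A^T$ indexed by rows $I$ and columns $K$ equals the minor of $\mathbf A$ indexed by rows $K$ and columns $I$), it yields that $\mathbf A^T$ has the Gantmacher--Krein property if and only if each $(\mathbf A^{(j)})^T$ has the strong Perron--Frobenius property. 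Conjoining these two statements gives $(1)\Leftrightarrow(2)$.

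The step $(2)\Leftrightarrow(3)$ is the Johnson--Tarazaga theorem applied separately to each square matrix $\mathbf A^{(j)}$ of order $\binom{n}{j}$: the conjunction of the strong Perron--Frobenius property for $\mathbf A^{(j)}$ and for its transpose is equivalent to eventual positivity of $\mathbf A^{(j)}$. For $(3)\Leftrightarrow(4)$ the key tool is the Cauchy--Binet identity $(\mathbf A^k)^{(j)} = (\mathbf A^{(j)})^k$. If $\mathbf A^k$ is STP for every $k\geq k_0$, then $(\mathbf A^{(j)})^k = (\mathbf A^k)^{(j)} > 0$ for each $j$ and each $k\geq k_0$, yielding (3). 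Conversely, if for each $j$ there is a threshold $k_j$ with $(\mathbf A^{(j)})^k > 0$ for $k\geq k_j$, then setting $k_0 := \max_{1\leq j\leq n} k_j$ makes every compound of $\mathbf A^k$ positive for every $k\geq k_0$, which is precisely the definition of STP.

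The argument is essentially bookkeeping built on previously established results; no new hard estimate is needed. The main subtleties worth a careful check are the compound/transpose identity $(\mathbf A^T)^{(j)}=(\mathbf A^{(j)})^T$ and the observation that, since $n$ is finite, the various $k_j$ in (3) can be consolidated into a single power index in (4). With these two routine points in place, the cycle of implications closes cleanly.
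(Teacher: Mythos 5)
Your proposal is correct. Three of your four links coincide with the paper's own proof: $(1)\Rightarrow(2)$ via Lemma 4 applied to ${\mathbf A}$ and ${\mathbf A}^T$ together with the identity $({\mathbf A}^T)^{(j)}=({\mathbf A}^{(j)})^T$; $(2)\Rightarrow(3)$ via the Johnson--Tarazaga theorem applied to each compound; and $(3)\Rightarrow(4)$ via Cauchy--Binet and consolidation of the finitely many power indices $k_j$ into $k_0=\max_j k_j$. Where you genuinely diverge is in how the equivalence is closed. The paper proves the cycle $(1)\Rightarrow(2)\Rightarrow(3)\Rightarrow(4)\Rightarrow(1)$, and its $(4)\Rightarrow(1)$ step is the most laborious one: it applies the Gantmacher--Krein theorem to ${\mathbf A}^k$, extracts $k$th roots of the eigenvalues, uses the parity of $k$ and $k+1$ to rule out negative eigenvalues, and transfers the Markov systems from ${\mathbf A}^k$ to ${\mathbf A}$ (and repeats all of this for ${\mathbf A}^T$). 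You instead note that $(4)\Rightarrow(3)$ is immediate from Cauchy--Binet --- STP of ${\mathbf A}^k$ means precisely that $({\mathbf A}^{(j)})^k=({\mathbf A}^k)^{(j)}>0$ for every $j$ --- and then run the remaining links backwards, which is legitimate because both Lemma 4 and the Johnson--Tarazaga theorem are stated and proved as biconditionals. Your route is shorter and pushes all of the spectral work into the already-established $(2)\Rightarrow(1)$ direction of Lemma 4 (the ratio argument $\lambda_j=\rho({\mathbf A}^{(j)})/\rho({\mathbf A}^{(j-1)})$ plus Ando's lemma); the paper's route gives a self-contained direct argument for $(4)\Rightarrow(1)$ and incidentally records the root-extraction/parity technique that it reuses elsewhere. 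Both arguments are valid.
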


\begin{proof} $(1) \Rightarrow (2).$ Applying Lemma 4 to the matrix $\mathbf A$, we obtain that the $j$th compound matrix ${\mathbf A}^{(j)}$ has the strong Perron--Frobenius property for every $j$, $j = 1, \ \ldots, \ n$. Applying Lemma 4 to ${\mathbf A}^T$ we obtain that $({\mathbf A}^T)^{(j)}$ has the strong Perron--Frobenius property for every $j$, $j = 1, \ \ldots, \ n$. Observing that $({\mathbf A}^T)^{(j)} = ({\mathbf A}^{(j)})^T$ for every $j$, $j = 1, \ \ldots, \ n$, we complete the proof.

$(2) \Rightarrow (3).$ It is enough for the proof, just to apply Theorem 1.

$(3) \Rightarrow (4).$ Since the compound matrices ${\mathbf A}^{(j)}$ are eventually positive for
all $j = 1, \ \ldots, \ n$, we can find the power index $k_j$ such that $({\mathbf A}^{(j)})^k$ is positive for all positive integers $k \geq k_j$. Then let us fix $k_0 = \max\limits_j(k_j)$ and examine ${\mathbf A}^k$, for $k \geq k_0$. Applying the Cauchy--Binet formula, we obtain that
$$({\mathbf A}^k)^{(j)} = ({\mathbf A}^{(j)})^{k},$$ for all $j = 1, \ \ldots, \ n$, and $({\mathbf A}^{(j)})^{k}$ is positive since $k \geq k_0 \geq k_j$. Thus ${\mathbf A}^k$ is STP for all $k \geq k_0$ and $\mathbf A$ is ESTP.

$(4) \Rightarrow (1).$ Since the matrix ${\mathbf A}$ is eventually strictly totally positive, we can find a power index $k_0$ such that ${\mathbf A}^k$ is STP for all $k \geq k_0$. Applying Theorem 5 to ${\mathbf A}^k$, we obtain that all the eigenvalues $\mu_1, \ \ldots, \ \mu_n$ of ${\mathbf A}^k$ are positive, simple and strictly different in absolute value from each other:
$$\mu_1 > \mu_2 > \ldots > \mu_n > 0.$$ The corresponding eigenvectors $(x_1, \ \ldots, \ x_n)$ form a Markov system. Since all eigenvalues of ${\mathbf A}^k$ are just powers of the eigenvalues of $\mathbf A$, there is an eigenvalue $\lambda_i$ of $\mathbf A$ such that $\lambda_i^k = \mu_i$, $i = 1, \ \ldots, \ n$. Thus we obtain that all the eigenvalues of $\mathbf A$ are simple, real (positive or negative) and different in absolute value from each other. Applying the same reasoning to ${\mathbf A}^{k+1}$ and observing that either $k$ or $k+1$ must be odd, we obtain that all the eigenvalues of $\mathbf A$ are positive. Since $\mathbf A$ and ${\mathbf A}^k$ share the same eigenvectors, we get that the corresponding eigenvectors of $\mathbf A$ form a Markov system. Now let us examine the transpose matrix ${\mathbf A}^T$. It is easy to see that ${\mathbf A}^T$ is also eventually strictly totally positive. Applying the same reasoning to ${\mathbf A}^T$ we get that the eigenvectors of ${\mathbf A}^T$ also form a Markov system.
 \qed
\end{proof}
\begin{corollary}
Let an $n\times n$ matrix $\mathbf A$ have a Gantmacher--Krein property (be ESTP). Then ${\mathbf A} + \alpha{\mathbf I}$ also has the Gantmacher--Krein property (respectively, is ESTP), whenever $\alpha > 0$.
\end{corollary}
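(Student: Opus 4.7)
The plan is to exploit the simple observation that $\mathbf{A}$ and $\mathbf{A} + \alpha \mathbf{I}$ share the same eigenvectors, while the eigenvalues are merely shifted by $\alpha$. Since the Gantmacher--Krein property is purely a spectral/eigenvector statement, and the notion of a Markov system depends only on the eigenvectors (not on the eigenvalues), the shift should preserve the property without difficulty, and the ESTP version then follows from Theorem 7.

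First I would handle the Gantmacher--Krein case directly. Assume $\mathbf{A}$ has $n$ positive simple eigenvalues $\lambda_1 > \lambda_2 > \ldots > \lambda_n > 0$ with a Markov system of eigenvectors $\{x_1, \ldots, x_n\}$. The identity $(\mathbf{A} + \alpha \mathbf{I}) x_i = (\lambda_i + \alpha) x_i$ shows that each $x_i$ remains an eigenvector of $\mathbf{A} + \alpha\mathbf{I}$ for the eigenvalue $\lambda_i + \alpha$. For $\alpha > 0$, the shifted eigenvalues are still positive, and the strict ordering $\lambda_1 + \alpha > \lambda_2 + \alpha > \ldots > \lambda_n + \alpha > 0$ preserves both simplicity and distinctness. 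Since the eigenvectors themselves are unchanged, they still form a Markov system. This proves that $\mathbf{A} + \alpha\mathbf{I}$ has the Gantmacher--Krein property.

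For the ESTP case, I would appeal to the equivalence ${\rm (1)} \Leftrightarrow {\rm (4)}$ in Theorem 7: a matrix is ESTP if and only if both it and its transpose have the Gantmacher--Krein property. If $\mathbf{A}$ is ESTP, then both $\mathbf{A}$ and $\mathbf{A}^T$ enjoy the GK property. Applying the argument of the previous paragraph to each of them separately, and using $(\mathbf{A} + \alpha \mathbf{I})^T = \mathbf{A}^T + \alpha \mathbf{I}$, it follows that both $\mathbf{A} + \alpha \mathbf{I}$ and its transpose have the Gantmacher--Krein property. Invoking Theorem 7 again in the reverse direction, $\mathbf{A} + \alpha\mathbf{I}$ is ESTP.

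There is essentially no obstacle here; the verification is entirely routine once one recognizes that the Markov system condition is invariant under scalar shifts of the underlying matrix because the eigenvectors themselves do not change. The only thing that must be checked carefully is positivity of the shifted eigenvalues, which is immediate from $\alpha > 0$ and $\lambda_i > 0$.
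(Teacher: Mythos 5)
Your proposal is correct and follows essentially the same route as the paper: the paper's proof is exactly the observation that $\mathbf{A}+\alpha\mathbf{I}$ has eigenvalues $\lambda_i+\alpha$ with the same eigenvectors and eigenfunctionals, so the Markov systems are unchanged and the shifted spectrum remains positive, simple, and distinct. Your additional step of passing through Theorem 7 for the ESTP case is the same mechanism the paper implicitly relies on.
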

\begin{proof}
For the proof, it is enough to observe that if $\lambda_1, \ \ldots, \ \lambda_n$ are the eigenvalues of $\mathbf A$ then $\lambda_1 + \alpha, \ \ldots, \ \lambda_n + \alpha$ are the eigenvalues of ${\mathbf A} + \alpha{\mathbf I}$ with the same systems of the corresponding eigenvectors and eigenfunctionals.
\qed
\end{proof}

\begin{example}
 Let us consider the matrix

$${\mathbf A} = \begin{pmatrix} 10 & 2 & 2 \\ 3 & 2 & 1 \\ 7 & 4 & 6  \end{pmatrix}.$$
In this case, we have
$${\mathbf A}^{(2)} = \begin{pmatrix} 14 & 4 & -2 \\ 26 & 46 & 4 \\ -2 & 11 & 8 \end{pmatrix}.$$

$${\mathbf A}^{(3)} = 54.$$

Since $$({\mathbf A}^{(2)})^{3} = \begin{pmatrix} 9980 & 10936 & 40 \\ 80264 & 112156 & 7264 \\ 218400 & 29156 & 2756 \end{pmatrix}$$ and
$$({\mathbf A}^{(2)})^{4} = \begin{pmatrix} 423976 & 543416 & 24104 \\ 4025224 & 5560136 & 346208 \\ 1010144 & 1445092 & 101872 \end{pmatrix}$$ are positive, we apply Theorem 1 and conclude that ${\mathbf A}^{(2)}$ is eventually positive. Thus $\mathbf A$ is ESTP (by Theorem 7).
\end{example}
\begin{example}
Let
$${\mathbf A} = \begin{pmatrix} 8 & 4 & 1 \\ 4 & 10 & 3 \\ -3 & 5 & 9 \end{pmatrix}.$$
In this case we have
$${\mathbf A}^{(2)} = \begin{pmatrix} 64 & 20 & 2 \\ 52 & 75 & 31 \\ 50 & 45 &  75 \end{pmatrix};$$
$${\mathbf A}^{(3)} = \det A = 470.$$

Since ${\mathbf A}^k > 0$ for $k = 5$ and $k = 6$, we apply Theorem 1 and Theorem 7 and obtain that ${\mathbf A}$ is ESTP. However, the eigenvalues of the $2 \times 2$ submatrix $\widehat{{\mathbf A}} = \begin{pmatrix} 8 & 1 \\ -3 & 9 \end{pmatrix}$, obtained from ${\mathbf A}$ by deleting the second row and column, are both complex: $\lambda_1 = \frac{17 + i \sqrt{11}}{2}$ and $\lambda_2 = \frac{17 - i\sqrt{11}}{2}$.
\end{example}

So let us note that {\it a principal submatrix (i.e. obtained from the initial matrix by deleting rows and columns with the same indices) of an ESTP matrix may not be ESTP}.

\section{Similarity transformations preserving the Gantmacher--Krein property and being in GK}
Let us recall some more definitions concerning matrix classes (see \cite{BERPL}, \cite{GANT}).
\begin{definition}
A matrix $\mathbf S$ is called {\it monotone} if it is invertible and ${\mathbf S}^{-1}$ is nonnegative.
\end{definition}

\begin{definition}
Let ${\mathbf A} = \{a_{ij}\}_{i,j = 1}^n$. A matrix $\mathbf A$ is called {\it sign-alternating} if the matrix ${\mathbf A}^*$ with the entries
 $$a_{ij}^* = (-1)^{i+j}a_{ij}, \qquad i,j = 1, \ \ldots, \ n$$
is nonnegative.
\end{definition}

A matrix ${\mathbf A}$ is sign-alternating if and only if it can be written in the following form
$${\mathbf A} = {\mathbf D}\widetilde{{\mathbf A}}{\mathbf D},$$
where $\widetilde{{\mathbf A}}$ is a nonnegative matrix, ${\mathbf D}$ is a diagonal matrix with the diagonal entries $d_{ii} = (-1)^{i+1}$, $i = 1, \ \ldots, \ n$.

\begin{definition}
A real $n \times n$ matrix ${\mathbf A}$ is called {\it totally sign-alternating (TSA)}\footnote{Such matrices are called sign-regular in \cite{GANTKREI}.} if ${\mathbf A}^*$ is TP.
\end{definition}

A matrix ${\mathbf A}$ is TSA if and only if it can be written in the following form
${\mathbf A} = {\mathbf D}\widetilde{{\mathbf A}}{\mathbf D},$
where $\widetilde{{\mathbf A}}$ is a TP matrix, ${\mathbf D}$ is a diagonal matrix with the diagonal entries $d_{ii} = (-1)^{i+1},$ $i = 1, \ \ldots, \ n$.

The following properties of TSA matrices were stated in \cite{GANTKREI}.

\begin{lemma}
Let $\mathbf A$ be an invertible matrix. Then
\begin{enumerate}
\item[\rm 1.] If one of the matrices $\mathbf A$ and ${\mathbf A}^{-1}$ is TP then the other is TSA.
\item[\rm 2.] The matrix $\mathbf A$ is TP if and only if the matrix $({\mathbf A}^*)^{-1}$ is also TP.
\end{enumerate}
\end{lemma}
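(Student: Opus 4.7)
The plan is to reduce both parts of the lemma to Jacobi's classical identity for complementary minors of the inverse, combined with the observation that the map $\mathbf A \mapsto \mathbf A^*$ is conjugation by the diagonal involution $\mathbf E = \operatorname{diag}((-1)^1, \ldots, (-1)^n)$. Concretely, $\mathbf A^* = \mathbf E \mathbf A \mathbf E$ and $\mathbf E^2 = \mathbf I$, so that $(\mathbf A^{-1})^* = \mathbf E \mathbf A^{-1} \mathbf E$ coincides with $(\mathbf A^*)^{-1} = (\mathbf E \mathbf A \mathbf E)^{-1} = \mathbf E \mathbf A^{-1} \mathbf E$. This single identity will deliver part~(2) immediately once part~(1) is established.

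For part~(1), I invoke Jacobi's identity: for any invertible $\mathbf A$ and index sets $I, J \subseteq [n]$ of equal cardinality $k$,
\[
\det\bigl(\mathbf A^{-1}[I|J]\bigr) = \frac{(-1)^{\sigma(I)+\sigma(J)}}{\det \mathbf A}\,\det\bigl(\mathbf A[J^c|I^c]\bigr), \qquad \sigma(I) := \sum_{i \in I} i.
\]
Assume $\mathbf A$ is TP and invertible. Writing $(\mathbf A^{-1})^*[I|J] = \mathbf E[I|I]\,\mathbf A^{-1}[I|J]\,\mathbf E[J|J]$ and taking determinants produces a factor $(-1)^{\sigma(I)+\sigma(J)}$ which cancels exactly the Jacobi sign, leaving
\[
\det\bigl((\mathbf A^{-1})^*[I|J]\bigr) = \frac{\det\bigl(\mathbf A[J^c|I^c]\bigr)}{\det \mathbf A}.
\]
Since every minor of $\mathbf A$ is nonnegative and $\det \mathbf A > 0$, every minor of $(\mathbf A^{-1})^*$ is nonnegative; thus $(\mathbf A^{-1})^*$ is TP, which by definition means that $\mathbf A^{-1}$ is TSA. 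The converse direction follows by running the same argument with the roles of $\mathbf A$ and $\mathbf A^{-1}$ exchanged.

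For part~(2), the identity $(\mathbf A^*)^{-1} = (\mathbf A^{-1})^*$ reduces the statement ``$(\mathbf A^*)^{-1}$ is TP'' to ``$(\mathbf A^{-1})^*$ is TP'', which is in turn equivalent to $\mathbf A^{-1}$ being TSA, and by part~(1) this is equivalent to $\mathbf A$ being TP. The main obstacle throughout is sign bookkeeping: everything hinges on verifying that the Jacobi sign $(-1)^{\sigma(I)+\sigma(J)}$ matches exactly the sign picked up from sandwiching a $k \times k$ submatrix of $\mathbf A^{-1}$ between the diagonal $\pm 1$ blocks $\mathbf E[I|I]$ and $\mathbf E[J|J]$. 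This is the one step I would verify with explicit care before invoking the classical Jacobi identity as a black box.
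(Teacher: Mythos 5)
Your proof is correct. The paper itself offers no proof of this lemma --- it is simply recalled from Gantmacher--Krein --- and your argument via Jacobi's identity, with the observation that conjugation by $\operatorname{diag}((-1)^1,\ldots,(-1)^n)$ contributes exactly the sign $(-1)^{\sigma(I)+\sigma(J)}$ that cancels the Jacobi sign, is the classical proof of the cited result; the sign bookkeeping you flag does check out, and $\det\mathbf A>0$ follows from total positivity plus invertibility. One small presentational point: in part~(2) you invoke part~(1) as an equivalence (``$\mathbf A^{-1}$ TSA iff $\mathbf A$ TP''), whereas part~(1) as stated only gives implications; the backward direction is cleanest obtained by applying part~(1) to the TP matrix $({\mathbf A}^*)^{-1}$ and using $({\mathbf A}^*)^*=\mathbf A$, or directly from your displayed minor identity.
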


The similarity matrices preserving the strong Perron--Frobenius property and the class PF are described in \cite{ELH}. The following statements are proved in \cite{ELH} (see \cite{ELH}, p. 41, Theorems 3.6 and 3.7).

\begin{theorem}
For any invertible matrix $\mathbf S$, the following statements are equivalent:
\begin{enumerate}
\item[\rm 1.] Either $\mathbf S$ or $-{\mathbf S}$ is monotone.
\item[\rm 2.] $\mathbf {\mathbf S}^{-1}{\mathbf A}{\mathbf S}$ has the strong Perron--Frobenius property for all matrices $\mathbf A$ having the strong Perron--Frobenius property.
\end{enumerate}
\end{theorem}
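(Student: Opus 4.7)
The plan is to prove the two implications separately; the forward direction (1) $\Rightarrow$ (2) is a direct computation, while (2) $\Rightarrow$ (1) is handled by contrapositive, using a short topological lemma together with an explicit rank-one counterexample.

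For (1) $\Rightarrow$ (2), first reduce the case where $-\mathbf S$ is monotone to the case where $\mathbf S$ itself is monotone, via the identity $\mathbf S^{-1}\mathbf A \mathbf S = (-\mathbf S)^{-1}\mathbf A(-\mathbf S)$. Assume $\mathbf S^{-1} \geq 0$ and $\mathbf A$ has the strong Perron--Frobenius property with dominant simple eigenvalue $\lambda = \rho(\mathbf A) > 0$ and positive eigenvector $x$. Then $\mathbf S^{-1}\mathbf A \mathbf S$ shares the spectrum of $\mathbf A$ and admits $\mathbf S^{-1}x$ as its eigenvector for $\lambda$. Since $\mathbf S^{-1}$ is invertible it has no zero row; combined with $\mathbf S^{-1} \geq 0$ and $x > 0$, this yields $\mathbf S^{-1}x > 0$, so the strong Perron--Frobenius property is inherited by the conjugate.

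For (2) $\Rightarrow$ (1), I argue the contrapositive. Suppose $\mathbf M := \mathbf S^{-1}$ is entrywise neither $\geq 0$ nor $\leq 0$. The crucial step is to exhibit an $x > 0$ for which $\mathbf M x$ has both a positive and a negative coordinate. The cleanest route is topological: the subsets $A := \{x > 0 : \mathbf M x \geq 0\}$ and $B := \{x > 0 : \mathbf M x \leq 0\}$ are closed relative to the open positive orthant $\{x > 0\}$, and $A \cap B = \emptyset$ because $\mathbf M$ is invertible (so $\mathbf M x \neq 0$). If $A \cup B$ exhausted $\{x > 0\}$, then by connectedness of the orthant one of them, say $B$, would be empty; passing to the limit in $x_\epsilon = e_j + \epsilon\mathbf{1}$ as $\epsilon \to 0^+$ would then force $\mathbf M e_j \geq 0$ for every $j$, giving $\mathbf M \geq 0$ and contradicting the hypothesis.

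Given such an $x$, pick any $v > 0$ and set $\mathbf A := x v^T$. Then $\mathbf A$ has rank one with $\mathbf A y = (v^T y) x$, so its only nonzero eigenvalue is $v^T x > 0$ with positive eigenvector $x$; in particular, $\mathbf A$ has the strong Perron--Frobenius property. However, $\mathbf S^{-1}\mathbf A \mathbf S = (\mathbf S^{-1}x)(v^T\mathbf S)$ is also rank one, with the same positive nonzero eigenvalue $v^T x$ but eigenvector $\mathbf S^{-1}x$, which is sign-mixed and cannot be rescaled to a positive vector. Hence (2) fails, completing the contrapositive. The principal obstacle is precisely the connectedness lemma above; once that purely topological fact is secured, the forward implication is trivial and the counterexample is a one-line construction.
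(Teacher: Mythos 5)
Your proof is correct. A point of context: the paper does not actually prove this statement --- it is Theorem 8, quoted from Elhashash's thesis \cite{ELH} --- but the underlying argument resurfaces inside the paper's proof of Theorem 10, so a comparison is still meaningful. There, the key step of producing a positive vector $v$ such that ${\mathbf S}^{-1}v$ has entries of both signs is carried out by an explicit two-case analysis: either some column of ${\mathbf S}^{-1}$ is already sign-mixed and one perturbs the corresponding basis vector by a small positive vector, or every column is signed, and one slides along the segment between a nonnegative column and a nonpositive column until the image first leaves the nonnegative orthant. Your connectedness argument proves exactly the same lemma by a soft topological route: the sets $\{x>0:\mathbf M x\geq 0\}$ and $\{x>0:\mathbf M x\leq 0\}$ are disjoint (by invertibility) and relatively closed, so they cannot cover the connected positive orthant unless one of them is all of it, which the limiting argument $e_j+\epsilon\mathbf 1\to e_j$ rules out. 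This trades the constructive, case-by-case bookkeeping of \cite{ELH} for a shorter non-constructive argument; both are sound. Your counterexample ${\mathbf A}=xv^{T}$ is also simpler than the device used in Theorem 10 (a diagonal rescaling of an STP matrix engineered to have $v$ as Perron vector), and it suffices here: $xv^{T}$ is a positive matrix whose unique nonzero eigenvalue $v^{T}x>0$ is simple and strictly dominant with eigenvector $x>0$, while the Perron eigenspace of the conjugate is spanned by the sign-mixed vector ${\mathbf S}^{-1}x$, so the strong Perron--Frobenius property fails. The forward implication is the standard similarity computation and agrees with what the cited source does.
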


\begin{theorem}
For any invertible matrix $\mathbf S$, the following statements are equivalent:
\begin{enumerate}
\item[\rm 1.]  $\mathbf S$ and ${\mathbf S}^{-1}$ are either both nonnegative or both nonpositive.
\item[\rm 2.] $\mathbf {\mathbf S}^{-1}{\mathbf A}{\mathbf S} \in PF$ for all matrices ${\mathbf A} \in PF$.
\end{enumerate}
\end{theorem}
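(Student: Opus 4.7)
The plan is to verify $(1)\Rightarrow(2)$ by direct inspection of the Perron data of an arbitrary $\mathbf A\in PF$, and to verify $(2)\Rightarrow(1)$ by probing $\mathbf S$ with the rank-one family $\mathbf A=xy^T$ for $x,y>0$, which automatically lies in $PF$.

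For $(1)\Rightarrow(2)$, fix $\mathbf A\in PF$ with strictly dominant simple positive eigenvalue $\lambda$, strictly positive right eigenvector $x$, and strictly positive eigenvector $y$ of $\mathbf A^T$. The similar matrix $\mathbf B:=\mathbf S^{-1}\mathbf A\mathbf S$ has the same spectrum, with right eigenvector $\mathbf S^{-1}x$ and with $\mathbf S^T y$ serving as the right eigenvector of $\mathbf B^T$. When $\mathbf S$ and $\mathbf S^{-1}$ are both nonnegative, both products are componentwise nonnegative, and invertibility of $\mathbf S^{\pm 1}$ forbids a zero row in either matrix, so both products are in fact strictly positive; hence $\mathbf B$ and $\mathbf B^T$ each have the strong Perron--Frobenius property and $\mathbf B\in PF$. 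The both-nonpositive case follows by replacing the eigenvectors by their negatives.

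For $(2)\Rightarrow(1)$, the key remark is that for any $x,y>0$ the positive rank-one matrix $\mathbf A=xy^T$ lies in $PF$: its only nonzero eigenvalue is $y^T x>0$, simple and strictly dominant, with right eigenvector $x$ and left eigenvector $y$. By hypothesis, $\mathbf B=\mathbf S^{-1}xy^T\mathbf S=(\mathbf S^{-1}x)(\mathbf S^T y)^T$ also belongs to $PF$, and the strong Perron--Frobenius property applied to $\mathbf B$ and to $\mathbf B^T$ yields signs $\sigma_1(x),\sigma_2(y)\in\{+1,-1\}$ with $\sigma_1(x)\mathbf S^{-1}x>0$ and $\sigma_2(y)\mathbf S^T y>0$. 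Pairing these two strictly positive vectors and using $\mathbf S\mathbf S^{-1}=\mathbf I$ gives
$$0<(\sigma_2\mathbf S^T y)^T(\sigma_1\mathbf S^{-1}x)=\sigma_1\sigma_2\,y^T x,$$
which, combined with $y^T x>0$, forces $\sigma_1(x)=\sigma_2(y)$ for every admissible $x,y>0$. Consequently $\sigma_1$ and $\sigma_2$ are independent of their arguments and agree in a single sign $\sigma\in\{+1,-1\}$. Letting $x\to e_j$ from the positive orthant and invoking continuity, $\sigma\mathbf S^{-1}e_j\geq 0$ for each $j$, so $\sigma\mathbf S^{-1}\geq 0$ entrywise; the analogous limit in $y$ gives $\sigma\mathbf S\geq 0$. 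Thus $\sigma=+1$ yields both $\mathbf S,\mathbf S^{-1}\geq 0$ and $\sigma=-1$ yields both $\mathbf S,\mathbf S^{-1}\leq 0$, proving (1).

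The main obstacle I anticipate is isolating where the strong Perron--Frobenius clause of membership in $PF$ is really used: it is essential that $\mathbf S^{-1}x$ and $\mathbf S^T y$ have no vanishing coordinate so that the signs $\sigma_1(x)$ and $\sigma_2(y)$ are unambiguously defined. This strict nonvanishing is delivered precisely by the strict positivity of the eigenvector in the strong Perron--Frobenius definition, and it is what permits the coupling $\sigma_1(x)=\sigma_2(y)$ through the inner product identity $(\mathbf S^T y)^T(\mathbf S^{-1}x)=y^T x$. Everything else is routine linear algebra.
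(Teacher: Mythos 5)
Your argument is correct. Note, however, that the paper itself does not prove this statement: it is quoted (as Theorem 9) from Elhashash's thesis \cite{ELH}, so there is no in-paper proof to compare against line by line. Your $(1)\Rightarrow(2)$ direction is the standard transport of the Perron data under similarity, with the right observation that an invertible nonnegative matrix has no zero row, so $\mathbf S^{-1}x$ and $\mathbf S^{T}y$ are strictly (not just weakly) positive. Your $(2)\Rightarrow(1)$ direction, probing $\mathbf S$ with the rank-one family $xy^{T}$, $x,y>0$, is clean and complete: membership of $xy^{T}$ in $PF$ is immediate, simplicity of the dominant eigenvalue pins the eigenvector of $\mathbf S^{-1}xy^{T}\mathbf S$ down to a scalar multiple of $\mathbf S^{-1}x$ so the sign $\sigma_1(x)$ is well defined, the identity $(\mathbf S^{T}y)^{T}(\mathbf S^{-1}x)=y^{T}x$ couples the two signs, and the limit $x\to e_j$ converts the open-orthant positivity into entrywise sign conditions on $\mathbf S^{\pm1}$. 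It is worth remarking that this rank-one probing is the same strategy, in a much simpler setting, that the paper deploys for the theorem it does prove (Theorem 10, the total-positivity analogue): there the $(2)\Rightarrow(1)$ direction also manufactures a matrix in the class whose dominant eigenvector is a prescribed positive vector $v$ chosen so that $(\mathbf S^{(j)})^{-1}v$ has mixed signs, but it must do so by diagonal conjugation of a fixed STP matrix because rank-one matrices are not available inside that class; your use of $xy^{T}$ buys a considerably shorter and more transparent argument in the $PF$ case.
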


Now we are going to analyze which similarity matrices $\mathbf S$ preserve the Gantmacher--Krein property or being in GK.

\begin{theorem}
For any invertible matrix $\mathbf S$, the following statements are equivalent:
\begin{enumerate}
\item[\rm 1.] Either $\mathbf S$ or $-{\mathbf S}$ is TSA.
\item[\rm 2.] $\mathbf {\mathbf S}^{-1}{\mathbf A}{\mathbf S}$ has the Gantmacher--Krein property for all matrices $\mathbf A$ having the \linebreak  Gantmacher--Krein property.
\end{enumerate}
\end{theorem}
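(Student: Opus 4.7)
The plan is to reduce the claim to the level of compound matrices. By Lemma~4 the Gantmacher--Krein property of $\mathbf A$ is equivalent to every compound $\mathbf A^{(j)}$ having the strong Perron--Frobenius property, and the Cauchy--Binet and Jacobi identities give $(\mathbf S^{-1}\mathbf A\mathbf S)^{(j)} = (\mathbf S^{(j)})^{-1}\mathbf A^{(j)}\mathbf S^{(j)}$ with $(\mathbf S^{(j)})^{-1}=(\mathbf S^{-1})^{(j)}$. Thus preservation of the Gantmacher--Krein property by the similarity $\mathbf A\mapsto\mathbf S^{-1}\mathbf A\mathbf S$ reduces, level by level, to preservation of the strong Perron--Frobenius property by conjugation with $\mathbf S^{(j)}$, for which Theorem~2 of Elhashash is the natural tool.

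For (1)$\Rightarrow$(2), if $\mathbf S$ is TSA then by Lemma~5(1) $\mathbf S^{-1}$ is TP, hence $(\mathbf S^{-1})^{(j)}\ge 0$ for every $j$, i.e.\ $(\mathbf S^{(j)})^{-1}\ge 0$ by Jacobi, so each $\mathbf S^{(j)}$ is monotone. If instead $-\mathbf S$ is TSA then $-\mathbf S^{-1}$ is TP, and $(-\mathbf S^{-1})^{(j)}=(-1)^j(\mathbf S^{-1})^{(j)}\ge 0$, which makes $\mathbf S^{(j)}$ monotone for even $j$ and $-\mathbf S^{(j)}$ monotone for odd $j$. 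Either way, for each $j$ one of $\mathbf S^{(j)},\,-\mathbf S^{(j)}$ is monotone, so Theorem~2 carries the strong Perron--Frobenius property from $\mathbf A^{(j)}$ (supplied by Lemma~4 applied to a GK matrix $\mathbf A$) to $(\mathbf S^{(j)})^{-1}\mathbf A^{(j)}\mathbf S^{(j)} = (\mathbf S^{-1}\mathbf A\mathbf S)^{(j)}$ at every level; Lemma~4 in reverse then yields (2).

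For (2)$\Rightarrow$(1), the plan has two stages. \emph{Stage 1}: for each $j$, deduce that $\mathbf S^{(j)}$ or $-\mathbf S^{(j)}$ is monotone. Feed (2) a rich family of GK matrices, for example the strictly totally positive matrices (GK by Theorem~5) and their perturbations $\mathbf A+\alpha\mathbf I$ (still GK by Corollary~2); the compounds $\mathbf A^{(j)}$ of this family are strictly positive and sweep out enough of the positive cone in $\mathbb R^{\binom{n}{j}\times\binom{n}{j}}$ that conjugation by $\mathbf S^{(j)}$ is forced to preserve the strong Perron--Frobenius property, whence Theorem~2 yields the per-level monotonicity. \emph{Stage 2}: combine the per-level conditions into a consistent sign pattern for $(\mathbf S^{-1})^{(j)}$. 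The Perron eigenvector of $\mathbf A^{(j)}$ is the decomposable product $x_1\wedge\ldots\wedge x_j$ (Kronecker's theorem), and its image under $(\mathbf S^{-1})^{(j)}$ must be the strictly same-sign Perron vector of $(\mathbf S^{-1}\mathbf A\mathbf S)^{(j)}$; the recursion $x_1\wedge\ldots\wedge x_{j+1} = (x_1\wedge\ldots\wedge x_j)\wedge x_{j+1}$ couples the sign of $(\mathbf S^{-1})^{(j)}$ across successive $j$. This coupling forces either $(\mathbf S^{-1})^{(j)}\ge 0$ for every $j$ (so $\mathbf S^{-1}$ is TP and $\mathbf S$ is TSA by Lemma~5(1)) or $(-1)^j(\mathbf S^{-1})^{(j)}\ge 0$ for every $j$ (so $-\mathbf S^{-1}$ is TP and $-\mathbf S$ is TSA).

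The main obstacle lies in Stage~2 of (2)$\Rightarrow$(1): the per-level argument alone only shows that each $(\mathbf S^{-1})^{(j)}$ is sign-definite, and a priori the choice of sign across different $j$ could fit neither of the two candidate patterns in (1). Extracting the required consistency must be done through the decomposability structure of the Perron eigenvectors of the compounds, together with the Laplace/Sylvester-type relations among the minors of $\mathbf S^{-1}$; this is the most delicate piece of the argument, whereas Stage~1 is essentially a direct extension of Theorem~2 via Lemma~4.
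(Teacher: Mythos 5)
Your direction (1)$\Rightarrow$(2) is correct and is essentially the paper's argument: pass to compound matrices via the Cauchy--Binet and Jacobi formulae, observe that each $\mathbf S^{(j)}$ (or $-\mathbf S^{(j)}$) is monotone, and apply Theorem 8 level by level together with Lemma 4 in both directions. The converse, however, is only a plan, and its two stages are precisely where the work lies. In Stage 1 you cannot invoke Theorem 8 in its converse direction as you suggest: hypothesis (2) only guarantees that conjugation by $\mathbf S^{(j)}$ preserves the strong Perron--Frobenius property for matrices of the special form $\mathbf A^{(j)}$ with $\mathbf A$ in GK, not for all strong Perron--Frobenius matrices of size $\binom{n}{j}$, and the claim that STP matrices and their shifts ``sweep out enough of the positive cone'' is exactly the statement that needs proof. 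The paper closes this gap by contraposition and an explicit construction: it takes a level $j$ at which $(\mathbf S^{(j)})^{-1}$ has entries of both signs, produces a positive vector $v$ with $(\mathbf S^{(j)})^{-1}v$ of mixed sign (two cases, according to whether some single column of $(\mathbf S^{(j)})^{-1}$ already has mixed signs), and then builds a positive diagonal $\mathbf D$ so that for $\mathbf B=\mathbf D^{-1}\mathbf A\mathbf D$, with $\mathbf A$ a fixed STP matrix, the Perron eigenvector of $\mathbf B^{(j)}$ is exactly $(\mathbf D^{(j)})^{-1}\varphi_j=v$; then $(\mathbf S^{-1}\mathbf B\mathbf S)^{(j)}$ has the non-sign-definite Perron eigenvector $(\mathbf S^{(j)})^{-1}v$, contradicting Lemma 4. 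Note that $\mathbf D^{(j)}$ ranges only over diagonal matrices whose entries are $j$-fold products of the $d_{kk}$, so even realizing the prescribed $v$ requires the careful $\epsilon$-dependent choice of $\mathbf D$ made in the paper; it is not automatic that every positive vector is a Perron vector of the $j$th compound of some GK matrix.

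Stage 2 you explicitly leave unresolved, and it is a genuine issue rather than a technicality: per-level sign-definiteness of the $(\mathbf S^{-1})^{(j)}$ does not by itself force one of the two global sign patterns in (1) (a permutation matrix has all compounds sign-definite while neither it nor its negative need be TP or TSA). The paper never confronts Stage 2 because its contrapositive begins by asserting that failure of (1) yields a single level $j$ at which $(\mathbf S^{-1})^{(j)}$ has entries of both signs --- an assertion your Stage-2 worry shows is not obvious and deserves scrutiny in its own right. As submitted, your (2)$\Rightarrow$(1) is therefore a program rather than a proof: the two steps you flag as delicate are the entire substance of that direction, and neither the ``sweeping out'' claim nor the sign-consistency claim is established.
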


\begin{proof} $(1) \Rightarrow (2).$ Suppose (1) holds. Assume without loss of generality that $\mathbf S$ is TSA. Then ${\mathbf S}^{-1}$ is TP and $({\mathbf S}^{-1})^{(j)}$ is nonnegative for all $j = 1, \ \ldots, \ n$. The Jacobi formula $({\mathbf S}^{-1})^{(j)} = ({\mathbf S}^{(j)})^{-1}$ shows that ${\mathbf S}^{(j)}$ is monotone for $j = 1, \ \ldots, \ n$.

Let $\mathbf A$ be an arbitrary $n \times n$ matrix having the Gantmacher--Krein property. Applying Lemma 4 to $\mathbf A$, we obtain that the $j$th compound matrix ${\mathbf A}^{(j)}$ has the strong Perron--Frobenius property for all $j = 1, \ \ldots, \ n$. Let us examine the matrix ${\mathbf S}^{-1}{\mathbf A}{\mathbf S}$. The Cauchy--Binet formula implies the equality
$$({\mathbf S}^{-1}{\mathbf A}{\mathbf S})^{(j)} = ({\mathbf S}^{-1})^{(j)}{\mathbf A}^{(j)}{\mathbf S}^{(j)}.$$
Applying Theorem 8 to every ${\mathbf A}^{(j)}$, we obtain that $({\mathbf S}^{-1})^{(j)}{\mathbf A}^{(j)}{\mathbf S}^{(j)}$ has the strong Perron--Frobenius property for all $j = 1, \ \ldots, \ n$. Applying Lemma 4 to ${\mathbf S}^{-1}{\mathbf A}{\mathbf S}$, we complete the proof.

$(2) \Rightarrow (1).$ Conversely, suppose (1) does not hold, i.e. ${\mathbf S}$ and $-{\mathbf S}$ are both not TSA. Then there is a positive integer $j$, $1 \leq j \leq n$ such that the $j$th compound matrix $({\mathbf S}^{-1})^{(j)} = ({\mathbf S}^{(j)})^{-1}$ has a positive entry and a negative entry.

In this case, following the reasoning from \cite{ELH}, we can find a positive vector $v$ such that ${({\mathbf S}^{(j)})^{-1}}v$ has a positive entry and a negative entry. Here we consider the following two cases.
\begin{enumerate}
\item[\rm (a)] The matrix ${({\mathbf S}^{(j)})^{-1}}$ has a column (say, $l$th column) with a positive entry and a negative entry. In this case, we put $v \in {\mathbb R}^{\binom{n}j}$ with the coordinates $v = (v^1, \ \ldots, \ v^{\binom{n}j})$, where $v^l = 1$, $v^i = \epsilon_i > 0$, $i \neq l$, $1 \leq i \leq \binom{n}j$. In this case, ${({\mathbf S}^{(j)})^{-1}}v$ has a positive entry and a negative entry, for sufficiently small values $\epsilon_i$.
\item[\rm (b)] Every nonzero column of ${({\mathbf S}^{(j)})^{-1}}$ is either nonpositive or nonnegative (with at least one nonzero entry). Let us assume that the $l$th column is nonnegative and the $m$th column is nonpositive. Without loss of generality, we assume that $l$ and $m$ are the numbers in the lexicographic numeration of the sets of indices $(i_1, \ \ldots, \ i_{r-1}, \ i_{r+1}, \ \ldots, \ i_{j+1})$ and $(i_1, \ \ldots, \ i_{s-1}, \ i_{s+1}, \ \ldots, \ i_{j+1})$, respectively (here $1 \leq i_1 < \ldots < i_{j+1} < n$, $1 \leq r, \ s \leq j+1$, $r \neq s$). (Indeed, suppose that all nonzero entries of each two columns of ${({\mathbf S}^{(j)})^{-1}}$ with the numbers as above are of the same sign (say, positive). In this case it is easy to see that the whole matrix ${({\mathbf S}^{(j)})^{-1}}$ is nonnegative.) Let us consider ${({\mathbf S}^{(j)})^{-1}} ((1 - \lambda)\widetilde{e}_l + \lambda\widetilde{e}_m)$, where $\widetilde{e}_l, \ \widetilde{e}_m$ are the $l$th and the $m$th basic vectors in ${\mathbb R}^{\binom{n}j}$ respectively, $\lambda \in [0,1]$. Note that ${({\mathbf S}^{(j)})^{-1}}\widetilde{e}_l$ is nonnegative and ${({\mathbf S}^{(j)})^{-1}}\widetilde{e}_m$ is nonpositive. Let $\lambda_0$ be the largest number from $[0,1]$ such that ${({\mathbf S}^{(j)})^{-1}} ((1 - \lambda)\widetilde{e}_l + \lambda\widetilde{e}_m)$ is still nonnegative. Since all the columns of ${({\mathbf S}^{(j)})^{-1}}$ are linearly independent, we obtain that the vector ${({\mathbf S}^{(j)})^{-1}} ((1 - \lambda_0)\widetilde{e}_l + \lambda_0\widetilde{e}_m)$ is nonzero for any $\lambda_0$. Let us choose $\lambda_1 > \lambda_0$, sufficiently close to $\lambda_0$. Then ${({\mathbf S}^{(j)})^{-1}} ((1 - \lambda_1)\widetilde{e}_l + \lambda_1\widetilde{e}_m)$ has a positive entry and a negative entry. Now let $v = (v^1, \ \ldots, \ v^{\binom{n}j})$ be the positive vector in ${\mathbb R}^{\binom{n}j}$ with $v^l = 1 - \lambda_1$, $v^m = \lambda_1$ and $v^i = \epsilon_i$, $1 \leq i \leq {\binom{n}j},$ $i \neq l, \ m$. Then ${({\mathbf S}^{(j)})^{-1}}v$ has a positive entry and a negative entry, for sufficiently small $\epsilon_i$.
\end{enumerate}
Now let us fix an arbitrary STP matrix $\mathbf A$. By Gantmacher--Krein theorem (Theorem 5), $\mathbf A$ has the Gantmacher--Krein property. Thus, applying Lemma 4, we obtain that the $j$th compound matrix ${\mathbf A}^{(j)}$ has the strong Perron--Frobenius property, that is, the first eigenvector $\varphi_j = (\varphi_j^1, \ \ldots, \ \varphi_j^{\binom{n}j})$ corresponding to the greatest in absolute value eigenvalue $\rho({\mathbf A}^{(j)})$ may be chosen to be positive. Let us construct a positive diagonal matrix $\mathbf D$ as follows:
\begin{enumerate}
\item[\rm (a')] For the case (a), let $l$ be the number in the lexicographic numeration of the set of indices $(i_1, \ \ldots, \ i_j)$, $1 \leq i_1 < \ldots < i_j \leq n$. Then we put ${\mathbf D} = {\rm diag}\{d_{11},  \ \ldots, \ d_{nn}\}$, where
    $$d_{kk} = \left\{\begin{array}{cc} \epsilon^{j-1}\varphi_j^l, & \mbox{if} \ k = i_1;
\\[10pt] \frac{1}{\epsilon}, & \mbox{if} \ k \in \{i_2, \ \ldots, \ i_j\};
\\[10pt] \frac{\max_p\varphi_j^p}{\epsilon^2} & \mbox{if} \ k \in [n]\setminus\{i_1, \ i_2, \ \ldots, \ i_j\}.
\end{array}\right.$$
\item[\rm (b')] For the case (b), where $l$ and $m$ are the numbers in the lexicographic numeration of the sets of indices $(i_1, \ \ldots, \ i_{r-1}, \ i_{r+1}, \ \ldots, \ i_{j+1})$ and $(i_1, \ \ldots, \ i_{s-1}, \ i_{s+1}, \ \ldots, \ i_{j+1})$, respectively (here $1 \leq i_1 < \ldots < i_{j+1} < n$, $1 \leq r, \ s \leq j+1$, $r \neq s$), we put ${\mathbf D} = {\rm diag}\{d_{11},  \ \ldots, \ d_{nn}\}$ as follows:
     $$d_{kk} = \left\{\begin{array}{cc} \frac{1}{(1-\lambda_1)\epsilon^{j-1}}\varphi_j^l, & \mbox{if} \ k = i_s;
     \\[10pt] \frac{1}{\lambda_1\epsilon^{j-1}}\varphi_j^m,, & \mbox{if} \ k = i_r;
\\[10pt] \epsilon, & \mbox{if} \ k \in \{i_1, \ \ldots, \ i_{j+1}\}\setminus\{i_r, \ i_s\};
\\[10pt] \frac{\max_p\varphi_j^p}{\epsilon^j} & \mbox{if} \ k \in [n]\setminus\{i_1, \ i_2, \ \ldots, \ i_{j+1}\}.
\end{array}\right.$$
\end{enumerate}
Considering the matrix ${\mathbf B} = {\mathbf D}^{-1}{\mathbf A}{\mathbf D}$, we obtain by the Cauchy--Binet formula that ${\mathbf B}$ is also STP, for any positive diagonal matrix $\mathbf D$. Then, applying the Gantmacher--Krein theorem (Theorem 5), we obtain that ${\mathbf B}$ has the Gantmacher--Krein property, and, by Lemma 4, ${\mathbf B}^{(j)}$ has the strong Perron--Frobenius property. Now us consider the matrix ${\mathbf S}^{-1}{\mathbf B}{\mathbf S}$ and its $j$th compound matrix $({\mathbf S}^{-1}{\mathbf B}{\mathbf S})^{(j)} = ({\mathbf S}^{(j)})^{-1}{\mathbf B}^{(j)}{\mathbf S}^{(j)}$ (through the Cauchy--Binet and Jacobi formulae). It is easy to see, that the eigenvalues of $({\mathbf S}^{(j)})^{-1}{\mathbf B}^{(j)}{\mathbf S}^{(j)}$ are the same that those of ${\mathbf B}^{(j)}$ and the first eigenvector of \linebreak $({\mathbf S}^{(j)})^{-1}{\mathbf B}^{(j)}{\mathbf S}^{(j)}$, corresponding to the greatest in absolute value eigenvalue, is of the form $({\mathbf S}^{(j)})^{-1}\psi_j$, where $\psi_j$ is the first eigenvector of ${\mathbf B}^{(j)}$ (corresponding to $\rho({\mathbf B}^{(j)})$). However, since ${\mathbf B} = {\mathbf D}^{-1}{\mathbf A}{\mathbf D}$, we have the equality ${\mathbf B}^{(j)} = ({\mathbf D}^{(j)})^{-1}{\mathbf A}^{(j)}{\mathbf D}^{(j)}$ (through the Cauchy--Binet and Jacobi formulae). Thus the first eigenvector $\psi_j$ of ${\mathbf B}^{(j)}$ is of the form $({\mathbf D}^{(j)})^{-1}\varphi_j$, where $\varphi_j$ is the first eigenvector of ${\mathbf A}^{(j)}$ corresponding to the greatest in absolute value eigenvalue $\rho({\mathbf A}^{(j)})$. Considering the entries of ${\mathbf D}^{-1}$, we obtain the following equalities:
$$\psi_j = ({\mathbf D}^{(j)})^{-1}\varphi_j = v;$$
So we obtain the contradiction: $({\mathbf S}^{(j)})^{-1}\psi_j = ({\mathbf S}^{(j)})^{-1}v$ by Lemma 4, must be positive, but, in the same time, it is chosen to have both positive entries and negative entries.
 \qed
\end{proof}

\begin{theorem}
For any $n \times n$ invertible matrix $\mathbf S$, the following statements are equivalent:
\begin{enumerate}
\item[\rm 1.]  $\mathbf S$ is positive (negative) diagonal matrix.
\item[\rm 2.] $\mathbf {\mathbf S}^{-1}{\mathbf A}{\mathbf S} \in GK$ for all matrices ${\mathbf A} \in GK$.
\end{enumerate}
\end{theorem}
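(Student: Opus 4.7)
My plan is to handle $(1)\Rightarrow(2)$ by direct inspection and to obtain $(2)\Rightarrow(1)$ by invoking Theorem 12 twice, then to pin down the diagonal structure through Lemma 6 and a short monomiality argument.

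For $(1)\Rightarrow(2)$ I would observe that conjugation by a positive diagonal $\mathbf S$ rescales each coordinate of every eigenvector by a positive factor, so all signature patterns, and hence $S^{-}$ and $S^{+}$, are preserved and the Markov-system condition survives. Applying this both to $\mathbf A$ and to $(\mathbf S^{-1}\mathbf A\mathbf S)^{T} = \mathbf S\mathbf A^{T}\mathbf S^{-1}$, which is again conjugation by a positive diagonal matrix, would give $\mathbf S^{-1}\mathbf A\mathbf S \in GK$. Since replacing $\mathbf S$ by $-\mathbf S$ does not change the similarity, the negative diagonal case is automatic.

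For $(2)\Rightarrow(1)$, the hypothesis in particular forces $\mathbf S^{-1}\mathbf A\mathbf S$ to have the Gantmacher--Krein property for every $\mathbf A\in GK$; since STP matrices lie in $GK$, the $(2)\Rightarrow(1)$ argument of Theorem 12 still goes through under this weaker hypothesis, so I would conclude that $\mathbf S$ or $-\mathbf S$ is TSA. Applying the same reasoning to the transpose $(\mathbf S^{-1}\mathbf A\mathbf S)^{T} = \mathbf P^{-1}\mathbf A^{T}\mathbf P$ with $\mathbf P = (\mathbf S^{T})^{-1}$, together with the fact that $\mathbf A \mapsto \mathbf A^{T}$ preserves $GK$, would yield that $\mathbf P$ or $-\mathbf P$ is TSA. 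Using the equivalence ``invertible-TSA iff TP-inverse'' (which follows from Lemma 6 part 2 applied to $\mathbf A^{*}$), these conditions translate respectively to ``$\mathbf S^{-1}$ or $-\mathbf S^{-1}$ is TP'' and ``$\mathbf S^{T}$ or $-\mathbf S^{T}$ is TP''; transposing the second and noting that the similarity $\mathbf S^{-1}\mathbf A\mathbf S$ is invariant under $\mathbf S\mapsto -\mathbf S$, I would reduce to the situation where $\mathbf S$ itself is TP and either $\mathbf S^{-1}$ or $-\mathbf S^{-1}$ is TP.

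The final step is to eliminate the remaining sign ambiguity and extract the diagonal structure. The case ``$\mathbf S$ TP, $-\mathbf S^{-1}$ TP'' I would rule out immediately, since the product of the two nonnegative matrices $\mathbf S$ and $-\mathbf S^{-1}$ equals $-\mathbf I$, whose diagonal entries are $-1$, impossible for a product of nonnegative matrices. In the remaining case both $\mathbf S$ and $\mathbf S^{-1}$ are nonnegative, and a short row-by-row inspection of $\mathbf S\mathbf S^{-1}=\mathbf I$ (if row $i$ of $\mathbf S$ contained two strictly positive entries in columns $j_{1},j_{2}$, then rows $j_{1}$ and $j_{2}$ of $\mathbf S^{-1}$ would both be supported at column $i$, forcing rank deficiency) shows that $\mathbf S$ is a monomial matrix, i.e., a positive diagonal matrix times a permutation. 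Any nontrivial inversion in the permutation would produce a strictly negative $2\times 2$ minor of $\mathbf S$, contradicting TP; hence $\mathbf S$ is positive diagonal, and restoring the optional sign flip gives the original $\mathbf S$ positive or negative diagonal. The main obstacle I anticipate is the careful bookkeeping needed to pass Theorem 12 through the transposition step and to collapse the various ``$\pm$'' cases from Lemma 6 into the clean two-case split above; once this is done the monomiality argument is routine.
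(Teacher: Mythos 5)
Your proposal is correct, but it departs from the paper's proof at two points, both in the direction of a more elementary argument. For $(1)\Rightarrow(2)$ the paper routes through compound matrices (Lemma 4 / Theorem 7) and Elhashash's Theorem 9 on PF-preserving similarities, whereas you simply note that a positive diagonal conjugation sends any combination $\sum c_i x_i$ of eigenvectors to ${\mathbf D}^{-1}\sum c_i x_i$, which has the same sign pattern and hence the same $S^{+}$, and that the eigenfunctionals undergo the analogous positive diagonal rescaling via ${\mathbf D}{\mathbf A}^{T}{\mathbf D}^{-1}$; this is valid and self-contained. For $(2)\Rightarrow(1)$ both you and the paper reduce to showing that ${\mathbf S}$ and ${\mathbf S}^{-1}$ both TP forces ${\mathbf S}$ to be positive diagonal, but the paper compresses the reduction into a single unjustified sentence, while you correctly observe that the preceding similarity theorem (Theorem 10 in the paper's numbering, not 12; likewise your ``Lemma 6'' is the paper's Lemma 5) still applies under the weaker hypothesis of preserving the class GK because its proof only tests against STP matrices, and you supply the transpose step and the elimination of the mixed-sign case via ${\mathbf S}\cdot(-{\mathbf S}^{-1})=-{\mathbf I}$, none of which the paper writes out. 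In the final step the paper runs an induction on $n$ using the principal submatrices $\widetilde{{\mathbf S}_1}$, $\widetilde{{\mathbf S}_n}$ and a separate treatment of the corner entries $s_{1n},s_{n1}$, whereas you invoke the standard fact that a nonnegative matrix with nonnegative inverse is monomial and then exclude any nontrivial permutation by exhibiting a negative $2\times 2$ minor. Your version is shorter and needs only nonnegativity of ${\mathbf S}$, ${\mathbf S}^{-1}$ and of the $2\times 2$ minors of ${\mathbf S}$, while the paper's induction exploits total positivity of both ${\mathbf S}$ and ${\mathbf S}^{*}$; both arguments are sound.
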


\begin{proof} ${\rm (1)} \Rightarrow {\rm (2)}$. Let $\mathbf A$ be an arbitrary matrix from GK. Applying Theorem 7 to $\mathbf A$, we obtain that the $j$th compound matrix ${\mathbf A}^{(j)}$ belongs to PF for every $j = 1, \ \ldots, \ n$. Let $\mathbf S$ be a positive diagonal matrix (otherwise we'll consider $- {\mathbf S}$). In this case, it is easy to see that ${\mathbf S}^{(j)}$ is also a positive diagonal matrix for every $j = 1, \ \ldots, \ n$. Applying the Cauchy--Binet and Jacobi formulae, we obtain the equality $$({\mathbf S}^{-1}{\mathbf A}{\mathbf S})^{(j)} = ({\mathbf S}^{(j)})^{-1}{\mathbf A}^{(j)}{\mathbf S}^{(j)}. \qquad (j = 1, \ \ldots, \ n)$$
Since ${\mathbf S}^{(j)}$ and $({\mathbf S}^{(j)})^{-1}$ are both nonnegative, we apply Theorem 9 and obtain that the matrix $({\mathbf S}^{(j)})^{-1}{\mathbf A}^{(j)}{\mathbf S}^{(j)}$ also belongs to PF for every $j = 1, \ \ldots, \ n$. Applying Theorem 7 to the matrix ${\mathbf S}^{-1}{\mathbf A}{\mathbf S}$ we obtain that ${\mathbf S}^{-1}{\mathbf A}{\mathbf S}$ belongs to GK.

${\rm (2)} \Rightarrow {\rm (1)}$. It is enough to prove that if $\mathbf S$ and ${\mathbf S}^{-1}$ are both TP then $\mathbf S$ is a positive diagonal matrix. If ${\mathbf S}^{-1}$ is TP then using Lemma 5 we obtain that $\mathbf S$ is TSA. Thus we have that the matrices $\mathbf S$ and ${\mathbf S}^*$ are both TP. Let us prove that it is possible only if $\mathbf S$ is positive diagonal. We will prove this by induction on $n$.

For $n=2$, we have ${\mathbf S} = \begin{pmatrix} s_{11} & s_{12} \\ s_{21} & s_{22} \end{pmatrix} \geq 0$, ${\mathbf S}^* = \begin{pmatrix} s_{11} & -s_{12} \\ -s_{21} & s_{22} \end{pmatrix} \geq 0$ and $\det ({\mathbf S}) > 0$. Obviously, this is possible if and only if $s_{12} = s_{21} = 0$, $s_{11}, \ s_{22} > 0$. For $n = 2$, the statement holds. Let it holds for $n-1$. We prove it for $n$.
We have that an $n \times n$ matrices ${\mathbf S}$ and ${\mathbf S}^{*}$ are both TP. Let us consider $\widetilde{{\mathbf S}_1}$ and $\widetilde{{\mathbf S}_n}$ --- two $(n-1) \times (n-1)$ principal submatrices of ${\mathbf S}$, obtained by deleting the first (respectively, the last) row and column. The following equalities hold for these submatrices:
$$(\widetilde{{\mathbf S}_n})^* = \widetilde{{\mathbf S}^*}_n ;$$
$$(\widetilde{{\mathbf S}_1})^* = \widetilde{{\mathbf S}^*}_1.$$

This equalities and total positivity of ${\mathbf S}$ and ${\mathbf S}^{*}$ imply that all the matrices $\widetilde{{\mathbf S}_n}$, $\widetilde{{\mathbf S}_1}$, $(\widetilde{{\mathbf S}_n})^*$ and $(\widetilde{{\mathbf S}_1})^*$ are totally positive. Thus we can apply the induction hypothesis and obtain that $\widetilde{{\mathbf S}_n}$ and $\widetilde{{\mathbf S}_1}$ are both positive diagonal. This implies $s_{ii} >0$ for all $i = 1, \ \ldots, \ n$ and all off-diagonal entries of ${\mathbf S}$, except, probably, $s_{1n}$ and $s_{n1}$ are equal to zero. Let us prove that $s_{1n}$ and $s_{n1}$ are also equal to zero. If $n$ is even then $s^*_{1n} = - s_{1n}$, $s^*_{n1} = - s_{n1}$ and the inequalities $s^*_{1n}, \ s^*_{n1}, \ s_{1n}, \ s_{n1} \geq 0$ imply $s_{n1} = s_{1n} = 0$. If $n$ is odd, let us consider the minors $S \begin{pmatrix}1 & 2 \\ 2 & n \end{pmatrix}$ and $S \begin{pmatrix}n-1 & n \\ 1 & n-1 \end{pmatrix}$. If $s_{n1}> 0$ or $s_{1n} > 0$ then we have one of the following estimates
$$S \begin{pmatrix}1 & 2 \\ 2 & n \end{pmatrix} = s_{12}s_{2n} - s_{22}s_{1n} = - s_{22}s_{1n} < 0;$$
$$S \begin{pmatrix}n-1 & n \\ 1 & n-1 \end{pmatrix} = s_{n-1, 1}s_{n,n-1} - s_{n-1,n-1}s_{n1} = - s_{n-1,n-1}s_{n1} < 0.$$
This contradicts the nonnegativity of ${\mathbf S}^{(2)}$. \qed
\end{proof}
\section{Eventually STJS matrices}
Now let us give the following definitions.
\begin{definition}
An $n\times n$ matrix $\mathbf A$ is said to have the {\it total signature equality (TSE) property} if $\mathbf A$ has $n$ positive simple eigenvalues $\{\lambda_1, \ \ldots, \ \lambda_n\}$ with the systems of the corresponding eigenvectors $\{x_1, \ \ldots, \ x_n\}$ and eigenfunctionals $\{x_1^*, \ \ldots, \ x_n^*\}$ satisfying the following conditions:
\begin{enumerate}
\item Both $x_1 \wedge \ldots \wedge x_j$ and $x_1^* \wedge \ldots \wedge x_j^*$ have no zero coordinates for all $j = 1, \ \ldots, \ n$.
\item ${\rm Sign}(x_1 \wedge \ldots \wedge x_j) = {\rm Sign}(x_1^* \wedge \ldots \wedge x_j^*)$ for all $j = 1, \ \ldots, \ n$.
\end{enumerate}
\end{definition}

It is obvious that if $\mathbf A$ has TSE property then so does ${\mathbf A}^T$.

Now let us present the following generalizations of total positivity (for the definition and examples, see also \cite{KU}).

\begin{definition}
A $n \times n$ matrix ${\mathbf A}$ is called {\it totally J--sign-symmetric (TJS)}, if it is J--sign-symmetric, and its $j$-th compound matrices ${\mathbf A}^{(j)}$ are also J--sign-symmetric for every $j$ $(j = 2, \ \ldots, \ n)$.
\end{definition}

\begin{definition}
A $n \times n$ matrix ${\mathbf A}$ is called {\it strictly totally J--sign-symmetric (STJS)}, if it is strictly J--sign-symmetric, and its $j$-th compound matrices ${\mathbf A}^{(j)}$ are also strictly J--sign-symmetric for every $j$ $(j = 2, \ \ldots, \ n)$.
\end{definition}

Let us give an example of an STJS matrix.

\begin{example}
Let us take
$${\mathbf A} = \begin{pmatrix} 5.6 & 1.2 & 0.7 & 0.5 \\ 6.6 & 6.2 & 4.1 & 8.1 \\ 4.4 & 4.4 & 3.5 & 8\\ 1 & 3.8 & 3.4 & 9 \end{pmatrix}$$
In this case, we have
$${\mathbf A}^{(2)} = \begin{pmatrix} 26.8 & 18.34 & 42.06 & 0.58 & 6.62 & 3.62 \\
 19.36 & 16.52 & 42.6 & 1.12 & 7.4 & 3.85 \\
 20.08 & 18.34 & 49.9 & 1.42 & 8.9 & 4.6 \\
 1.76 & 5.06 & 17.16 & 3.66 & 13.96 & 4.45 \\
 18.88 & 18.34 & 51.3 & 5.5 & 25.02 & 9.36 \\
 12.32 & 11.46 & 31.6 & 1.66 & 9.2 & 4.3 \\
\end{pmatrix};$$

$${\mathbf A}^{(3)} = \begin{pmatrix}
 15.656 & 58.464 & 15.438 & -2.602 \\
 22.008 & 87.992 & 25.676 & -3.532 \\
 4.168 & 19.76 & 7.69 & -0.45 \\
 -9.584 & -35.408 & -8.354 & 2.386 \\
\end{pmatrix};$$

$${\mathbf A}^{(4)} = \det({\mathbf A}) = 3.3928.$$
\end{example}

The following statement describes the spectral properties of STJS matrices (see \cite{KU}, p. 559, Theorem 31).

\begin{theorem} Let an $n \times n$ matrix ${\mathbf A}$ be
STJS. Then
all the eigenvalues of the operator $A$ are positive and simple:
$$\rho(A) = \lambda_1 > \lambda_2 > \ldots > \lambda_n > 0.$$
\end{theorem}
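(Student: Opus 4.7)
The plan is to mimic the implication $(2)\Rightarrow(1)$ of Lemma 4, but with Property 2 of SJS matrices playing the role of the strong Perron--Frobenius property at each compound level. Since $\mathbf{A}$ is STJS, each compound ${\mathbf A}^{(j)}$ for $j=1,\ldots,n$ is SJS, so Property 2 immediately gives that $\rho({\mathbf A}^{(j)})$ is a simple positive eigenvalue of ${\mathbf A}^{(j)}$, strictly greater in absolute value than every other eigenvalue of ${\mathbf A}^{(j)}$.

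List the eigenvalues of $\mathbf{A}$ as $|\lambda_1|\ge|\lambda_2|\ge\ldots\ge|\lambda_n|$, counted with multiplicity, and prove by induction on $j$ the following statement: $\lambda_1,\ldots,\lambda_j$ are real, positive, simple, and satisfy $\lambda_1>\lambda_2>\ldots>\lambda_j>|\lambda_i|$ for every $i>j$. The base case $j=1$ is just Property 2 applied to $\mathbf{A}$ itself, giving $\lambda_1=\rho(A)>0$. For the inductive step, Kronecker's theorem (Theorem 4) identifies the spectrum of ${\mathbf A}^{(j)}$ with the products $\lambda_{i_1}\cdots\lambda_{i_j}$ over $i_1<\ldots<i_j$. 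Using the inductive hypothesis, the unique product of maximal absolute value must coincide with $\rho({\mathbf A}^{(j)})$ by strict dominance and simplicity of the latter, and this product is necessarily $\lambda_1\cdots\lambda_j$. Comparing the two expressions yields
$$\lambda_j = \frac{\rho({\mathbf A}^{(j)})}{\lambda_1\cdots\lambda_{j-1}} > 0.$$
Strict dominance of $\rho({\mathbf A}^{(j)})$ then forces $|\lambda_j|>|\lambda_i|$ for $i>j$, while its simplicity in ${\mathbf A}^{(j)}$ transports to simplicity of $\lambda_j$ in $\mathbf{A}$ (otherwise $\rho({\mathbf A}^{(j)})=\lambda_1\cdots\lambda_j$ would inherit higher multiplicity in ${\mathbf A}^{(j)}$).

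The delicate point is ruling out the possibility that $\lambda_j$ is complex or negative. If $\lambda_j$ were non-real, its complex conjugate $\overline{\lambda_j}$ would also be an eigenvalue of $\mathbf{A}$ of the same modulus; then both products $\lambda_1\cdots\lambda_{j-1}\lambda_j$ and $\lambda_1\cdots\lambda_{j-1}\overline{\lambda_j}$ would be distinct eigenvalues of ${\mathbf A}^{(j)}$ lying on its spectral circle, contradicting the strict dominance of $\rho({\mathbf A}^{(j)})$. Once $\lambda_j$ is real, positivity follows from $\rho({\mathbf A}^{(j)})>0$ together with the inductive hypothesis $\lambda_1\cdots\lambda_{j-1}>0$. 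This is the same inductive skeleton used in the proof of the Gantmacher--Krein Theorem (Theorem 5) for STP matrices and in the proof of Lemma 4 for the strong Perron--Frobenius compound setting, applied here to the SJS class via Property 2; no new machinery beyond Property 2 and Kronecker's theorem is required.
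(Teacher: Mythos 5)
Your proof is correct. The paper does not actually prove this theorem itself (it cites \cite{KU}, Theorem 31), but your argument is precisely the compound-matrix/Kronecker induction that the paper uses for the analogous implication $(2)\Rightarrow(1)$ of Lemma 4, with Property 2 of SJS matrices substituted for the strong Perron--Frobenius property at each compound level, and your handling of the delicate points (ties in modulus forcing non-simplicity of $\rho({\mathbf A}^{(j)})$, and conjugate pairs violating strict dominance) is sound.
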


If $\mathbf A$ is STJS then ${\mathbf A}^T$ is also STJS (see \cite{KU}, p.558, Proposition 29). In this case, we have ${\rm Sign}({\mathbf A}^{(j)}) = {\rm Sign}(({\mathbf A}^{(j)})^T)$ for $j = 1, \ \ldots, \ n$ and it is not difficult to see that {\it an STJS matrix $\mathbf A$ has the total signature equality property}.

Analogically with ESTP matrices, we introduce the following generalization of the class of STJS matrices.

\begin{definition}
A real $n \times n$ matrix ${\mathbf A}$ is called {\it eventually strictly totally J-sign-symmetric
(ESTJS)} if there is a positive integer $k_0$ such that  ${\mathbf A}^k$ is STJS for all
$k \geq k_0$.
\end{definition}

The following result characterizes properties of ESTJS matrices.

\begin{theorem}
Let ${\mathbf A}$ be an $n \times n$ matrix. Then the following statements are equivalent.
\begin{enumerate}
\item[\rm 1.] The matrix ${\mathbf A}$ has the total signature equality property.
\item[\rm 2.] For every $j$, $j = 1, \ \ldots, \ n$, the $j$th compound matrix ${\mathbf A}^{(j)}$ has the signature equality property.
\item[\rm 3.] For every $j$, $j = 1, \ \ldots, \ n$, the $j$th compound matrix ${\mathbf A}^{(j)}$ is ESJS.
\item[\rm 4.] The matrix ${\mathbf A}$ is eventually STJS.
\end{enumerate}
\end{theorem}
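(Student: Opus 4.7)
The plan is to mirror the cyclic proof of Theorem 7, with Theorem 2 replacing Theorem 1 and the STJS spectral theorem (Theorem 10) replacing the Gantmacher--Krein theorem. I will establish the implications in the cycle $(1)\Rightarrow(2)\Rightarrow(3)\Rightarrow(4)\Rightarrow(1)$.

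For $(1)\Rightarrow(2)$, the key step is to identify the strictly dominant eigendata of $\mathbf{A}^{(j)}$. By Kronecker's theorem (Theorem 4), the eigenvalues of $\mathbf{A}^{(j)}$ are the products $\lambda_{i_1}\cdots\lambda_{i_j}$; since the TSE hypothesis forces $\lambda_1>\cdots>\lambda_n>0$, the product $\lambda_1\cdots\lambda_j$ is a positive simple strictly dominant eigenvalue of $\mathbf{A}^{(j)}$ with eigenvector $x_1\wedge\cdots\wedge x_j$. Using $(\mathbf{A}^{(j)})^T=(\mathbf{A}^T)^{(j)}$ and applying Kronecker's theorem to $\mathbf{A}^T$, the corresponding eigenfunctional of $\mathbf{A}^{(j)}$ is $x_1^*\wedge\cdots\wedge x_j^*$. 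The two conditions of the TSE definition then translate verbatim into the signature equality property for each $\mathbf{A}^{(j)}$.

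Implication $(2)\Rightarrow(3)$ is immediate by applying Theorem 2 termwise to each compound $\mathbf{A}^{(j)}$. For $(3)\Rightarrow(4)$, pick $k_j$ such that $(\mathbf{A}^{(j)})^k$ is SJS for every $k\geq k_j$, set $k_0=\max_j k_j$, and exploit the Cauchy--Binet identity $(\mathbf{A}^k)^{(j)}=(\mathbf{A}^{(j)})^k$ to conclude that for $k\geq k_0$ every compound of $\mathbf{A}^k$ is SJS, i.e., $\mathbf{A}^k$ is STJS. For $(4)\Rightarrow(1)$, apply Theorem 10 to $\mathbf{A}^{k_0}$ to obtain $n$ positive simple strictly ordered eigenvalues; since the eigenvalues of $\mathbf{A}$ are the $k_0$-th roots of these and hence real, simple, and pairwise distinct in absolute value, and since one of $k_0,k_0+1$ is odd (so applying the same reasoning to $\mathbf{A}^{k_0+1}$ rules out negative eigenvalues of $\mathbf{A}$), the spectrum of $\mathbf{A}$ itself consists of $n$ positive simple pairwise distinct eigenvalues. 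The remark in the excerpt that an STJS matrix has the TSE property, applied to $\mathbf{A}^{k_0}$, together with the fact that $\mathbf{A}$ and $\mathbf{A}^{k_0}$ share eigenvectors and eigenfunctionals (with the size ordering preserved because all eigenvalues of $\mathbf{A}$ are positive), transfers the TSE property from $\mathbf{A}^{k_0}$ to $\mathbf{A}$.

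I expect the main obstacle to be the bookkeeping in $(1)\Rightarrow(2)$: one must carefully match the exterior-product eigendata of $\mathbf{A}^{(j)}$ produced by Kronecker's theorem to the sign data encoded in the TSE definition, and verify that the signature equality property for each $\mathbf{A}^{(j)}$ is exactly what falls out. Once that correspondence is in place, the remaining implications are mechanical adaptations of the arguments already used in the proofs of Theorems 2 and 7.
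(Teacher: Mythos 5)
Your proposal is correct and follows essentially the same route as the paper: the paper's own proof of this theorem explicitly reduces each implication to the Kronecker theorem, the signature-equality/ESJS equivalence, and a copy of the Theorem~7 argument with the STJS spectral theorem in place of the Gantmacher--Krein theorem. The only blemish is a misnumbering of the cited results (the equivalence you invoke for $(2)\Rightarrow(3)$ is the paper's Theorem~3, not Theorem~2, and the STJS spectral theorem is Theorem~12, not Theorem~10), but the content you describe is unambiguous.
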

\begin{proof}
$(1) \Rightarrow (2)$. The proof just follows from the definition and the Kronecker theorem (Theorem 4), applied to $\mathbf A$ and ${\mathbf A}^T$.

$(2) \Rightarrow (3)$. The proof follows from Theorem 3, applied to each ${\mathbf A}^{(j)}$, $j = 1, \ \ldots, \ n$.

$(3) \Rightarrow (4)$. We just copy the reasoning of the proof of Theorem 7, implication $(3) \Rightarrow (4)$.

$(4) \Rightarrow (1)$. We just copy the reasoning of the proof of Theorem 7, implication $(4) \Rightarrow (1)$, replacing the Gantmacher--Krein theorem (Theorem 5) with Theorem 12.
\qed
\end{proof}
\section{Eventually $P$-matrices}
Let us recall the following definition (see \cite{FIP}).
\begin{definition}
An $n \times n$ matrix $\mathbf A$ is called a {\it $P$-matrix} if all its principal minors are positive, i.e the inequality $A \begin{pmatrix}i_1 & \ldots & i_k \\ i_1 & \ldots & i_k \end{pmatrix} > 0$
holds for all sets of indices $(i_1, \ \ldots, \ i_k), \ 1 \leq i_1 < \ldots < i_k \leq n$, and all $k, \ 1 \leq k \leq n$.
\end{definition}

The corresponding eventual property of matrices is defined as follows.

\begin{definition}
A matrix ${\mathbf A}$ is called an eventually {\it $P$-matrix} if there is a positive integer $k_0$ such that ${\mathbf A}^k$ is a $P$-matrix for all positive integers $k \geq k_0$.
\end{definition}

Assuming the positivity and simplicity of the spectrum, we obtain the following result describing the structure of eventually $P$-matrices.

\begin{theorem}
Let $\mathbf A$ be an $n \times n$ eventually $P$-matrix whose eigenvalues are all positive, simple and different in absolute value from each other:
$$ \rho(A) = \lambda_1 > \lambda_2 > \ldots > \lambda_n.$$
Then $\mathbf A$ is ESTJS.
\end{theorem}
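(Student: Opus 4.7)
The plan is to invoke Theorem 14 by verifying that $\mathbf A$ possesses the total signature equality property. Since the spectrum of $\mathbf A$ is positive and strictly decreasing, the Kronecker theorem (Theorem 4) gives that the eigenvalues of each compound $\mathbf A^{(j)}$ are the products $\lambda_{i_1}\cdots\lambda_{i_j}$ over all $1 \leq i_1 < \cdots < i_j \leq n$; the maximizer $\lambda_1\cdots\lambda_j$ strictly dominates every other such product, because replacing any index $i > j$ in the tuple by a missing index not exceeding $j$ strictly increases the product. Hence $\rho(\mathbf A^{(j)}) = \lambda_1\cdots\lambda_j$ is a simple positive strictly dominant eigenvalue of $\mathbf A^{(j)}$, with eigenvector $\varphi_j := x_1 \wedge \cdots \wedge x_j$ and eigenfunctional $\varphi_j^* := x_1^* \wedge \cdots \wedge x_j^*$.

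Applying Lemma 1 to each $\mathbf A^{(j)}$ yields
\[
\frac{1}{\rho(\mathbf A^{(j)})^k}\, (\mathbf A^{(j)})^k \;\longrightarrow\; \varphi_j \otimes \varphi_j^* \qquad (k \to \infty).
\]
By the Cauchy--Binet identity, $(\mathbf A^{(j)})^k = (\mathbf A^k)^{(j)}$, and the diagonal entries of $(\mathbf A^k)^{(j)}$ are precisely the principal $j$-minors of $\mathbf A^k$. The eventual $P$-matrix hypothesis makes every such minor strictly positive for every $k \geq k_0$, so passing to the limit gives $\varphi_j^p (\varphi_j^*)^p \geq 0$ for every index $p$. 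If this can be sharpened to strict positivity, then both $\varphi_j$ and $\varphi_j^*$ have no zero coordinates and satisfy ${\rm Sign}(\varphi_j) = {\rm Sign}(\varphi_j^*)$, which is exactly the total signature equality property (Definition 15). Theorem 14 then yields that $\mathbf A$ is ESTJS.

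The main obstacle is upgrading the nonnegativity $\varphi_j^p (\varphi_j^*)^p \geq 0$ to strict positivity: a priori, a principal minor can remain positive for every $k \geq k_0$ while growing at a strictly smaller exponential rate than $\rho(\mathbf A^{(j)})^k$. The approach I would take is to use the spectral decomposition $(\mathbf A^{(j)})^k_{pp} = \sum_\nu \nu^k (P_\nu)_{pp}$, where $\nu$ runs over the positive eigenvalues of $\mathbf A^{(j)}$ and $P_\nu$ is the corresponding spectral projector, and to rule out the degenerate case $(P_\rho)_{pp} = 0$ by combining the sustained positivity of this sum for every $k \geq k_0$ with the fact that the same asymptotic analysis applies simultaneously to every compound matrix $\mathbf A^{(r)}$, $1 \leq r \leq n$. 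These constraints together should force $(P_\rho)_{pp} > 0$. Once strict positivity is obtained for every $j$ and every $p$, the verification of the total signature equality property is immediate, and the conclusion follows by Theorem 14.
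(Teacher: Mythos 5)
Your proposal follows the paper's proof step for step: Kronecker's theorem (Theorem 4) identifies $\lambda_1\cdots\lambda_j$ as the simple, positive, strictly dominant eigenvalue of ${\mathbf A}^{(j)}$ with eigenvector $x_1\wedge\cdots\wedge x_j$ and eigenfunctional $x_1^*\wedge\cdots\wedge x_j^*$; Lemma 1 gives the rank-one asymptotics $\rho({\mathbf A}^{(j)})^{-k}({\mathbf A}^{(j)})^k\to\varphi_j\otimes\varphi_j^*$; Cauchy--Binet identifies the diagonal of $({\mathbf A}^{(j)})^k=({\mathbf A}^k)^{(j)}$ with the principal $j$-minors of ${\mathbf A}^k$; and the signature equality property of each compound is fed into Theorem 3 and then into the ESTJS characterization (which is Theorem 13 in the paper's numbering, not Theorem 14 --- Theorem 14 is the statement you are proving).

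The obstacle you single out is genuine, and it is precisely the point at which the paper is silent: the paper writes ``So the following inequalities hold: $\varphi_j^i(\varphi_j^*)^i>0$'' immediately after observing that the diagonal entries of $({\mathbf A}^{(j)})^k$ are eventually positive, whereas the limit argument only delivers $\varphi_j^i(\varphi_j^*)^i\geq 0$. As you note, a principal minor can remain positive for all $k\geq k_0$ while the corresponding entry of the rank-one limit vanishes, the positivity being carried by subdominant spectral terms; and if some coordinate of $\varphi_j$ or $\varphi_j^*$ vanishes, the signature equality property fails by definition. Your proposed repair --- writing $({\mathbf A}^{(j)})^k_{pp}=\sum_\nu\nu^k(P_\nu)_{pp}$ and arguing that the constraints over all compounds ``should force'' $(P_\rho)_{pp}>0$ --- is only a sketch: it is not clear how the simultaneous positivity across the compounds ${\mathbf A}^{(r)}$ excludes $(P_\rho)_{pp}=0$ at a single position $p$ of a single compound, and you do not carry the argument out. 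So, as written, your proof has the same unclosed step as the published one; the difference is that you have correctly located and acknowledged it. Closing it would require an additional idea (for instance, a direct argument that the relevant exterior-product coordinates cannot vanish under the stated spectral and eventual-$P$ hypotheses), not merely the asymptotics already invoked.
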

\begin{proof} Let us consider the $j$th compound matrix ${\mathbf A}^{(j)}$, $j = 1, \ \ldots, \ n$. Applying the Kronecker theorem (Theorem 4) to ${\mathbf A}^{(j)}$ we obtain that $\rho({\mathbf A}^{(j)}) = \lambda_1 \ldots \lambda_j$ is a positive simple strictly dominant eigenvalue of ${\mathbf A}^{(j)}$. Thus the conditions of Lemma 1 hold. Applying Lemma 1 to each ${\mathbf A}^{(j)}$, $j = 1, \ \ldots, \ n$, we obtain the approximation:
$$\frac{1}{\rho({\mathbf A}^{(j)})^k}({\mathbf A}^{(j)})^k \rightarrow \varphi_j \otimes \varphi^*_j \qquad \mbox{as} \quad k \rightarrow \infty,$$
where $\varphi_j = (\varphi_j^1, \ \ldots, \ \varphi_j^{\binom{n}j})$ and $\varphi_j^*= ((\varphi_j^*)^1, \ \ldots, \ (\varphi_j^*)^{\binom{n}j})$ are the eigenvector and the eigenfunctional of ${\mathbf A}^{(j)}$ corresponding to $\rho({\mathbf A}^{(j)})$, respectively. Applying the Cauchy--Binet formula, we obtain that
$$({\mathbf A}^{(j)})^k = ({\mathbf A}^{k})^{(j)}.$$ Since $\mathbf A$ is an eventually $P$-matrix, we have that $({\mathbf A}^{(j)})^k$ has positive principal diagonal entries for sufficiently big $k$.

Let us consider the principal diagonal entries of the matrix $\varphi_j \otimes \varphi^*_j$. These are $\varphi_j^i(\varphi_j^*)^i$, $i = 1, \ \ldots, \ \binom{n}j.$ So the following inequalities hold:
$$\varphi_j^i(\varphi_j^*)^i > 0, \qquad  i = 1, \ \ldots, \ \binom{n}j.$$
It follows that ${\rm Sign}(\varphi_j) = {\rm Sign}(\varphi_j^*)$, i.e. ${\mathbf A}^{(j)}$ has the signature equality property. Applying Theorem 3, we get that ${\mathbf A}^{(j)}$ is eventually SJS for all $j = 1, \ \ldots, \ n$. Then, applying Theorem 13 we get that $\mathbf A$ is eventually STJS.
\qed
\end{proof}
\begin{corollary} Any eventually $P$-matrix with a positive simple distinct spectrum has the total signature equality property.
\end{corollary}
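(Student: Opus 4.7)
The corollary is essentially an immediate consequence of the two preceding results, so my plan is simply to chain them together. First I would invoke Theorem 15: the hypothesis of the corollary (an eventually $P$-matrix whose eigenvalues $\lambda_1 > \lambda_2 > \ldots > \lambda_n$ are all positive, simple, and distinct in absolute value) is precisely the hypothesis of Theorem 15, whose conclusion is that $\mathbf A$ is ESTJS. This is condition (4) in Theorem 13.

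Next I would apply Theorem 13, which provides the equivalence of four conditions on an $n \times n$ matrix, namely (1) having the total signature equality property, (2) every compound ${\mathbf A}^{(j)}$ having the signature equality property, (3) every compound ${\mathbf A}^{(j)}$ being ESJS, and (4) the matrix ${\mathbf A}$ itself being ESTJS. The implication $(4) \Rightarrow (1)$ is exactly what is needed: since $\mathbf A$ is ESTJS, it has the total signature equality property, which is the desired conclusion.

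Since both Theorem 15 and Theorem 13 are already proved in the excerpt, the corollary's proof is just the two-line composition $(\text{hypothesis}) \Rightarrow \text{ESTJS} \Rightarrow \text{TSE property}$. There is no real obstacle here; the only thing worth flagging is that one should verify that the hypothesis of Theorem 15 indeed matches what the corollary calls a ``positive simple distinct spectrum'' (i.e.\ that the eigenvalues being positive and simple together with being pairwise distinct in absolute value is the same assumption), but this is immediate because distinct positive numbers are automatically distinct in absolute value. Thus the proof will be a one-paragraph citation of Theorems 15 and 13 with no computation required.
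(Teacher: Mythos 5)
Your proof is correct and follows essentially the same route as the paper, which simply cites the reasoning of the preceding theorem (the final theorem, numbered 14 in the paper, not 15) together with Theorem 13's equivalence of being ESTJS with having the total signature equality property. The only cosmetic issue is the theorem number; the content you cite is the right one.
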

\begin{proof} Follows from the above reasoning and Theorem 13.
\qed
\end{proof}



\begin{thebibliography}{30}
\bibitem{AN}
{\sc T. Ando}, {\it Totally positive matrices.} Linear Algebra Appl. {\bf 90} (1987), 165-219.

\bibitem{BE}
{\sc A. Berman, M. Catral , L.M. Dealba , A. Elhashash, F.J. Hall, L. Hogben , I. Kim , D.D. Olesky, P. Tarazaga , M.J. Tsatsomeros and P. Van Den Driessche},
{\it Sign patterns that allow eventual positivity}, ELA {\bf 19} (2010), 108-120.

\bibitem{BERPL}
{\sc A. Berman and R.J. Plemmons}, {\it Nonnegative Matrices in the
Mathematical Sciences.} Academic Press, New York, 1979.

\bibitem{ELH}
{\sc A. Elhashash}, {\it Characterizations of matrices enjoying the Perron-Frobenius property and generalizations of M-matrices which may not have nonnegative inverses}, Ph.D., Temple University, 2008.

\bibitem{ELS1}
{\sc A. Elhashash and D.B. Szyld}, {\it Two characterizations of matrices with the Perron--Frobenius property,} Numer. Linear Algebra Appl. {\bf 16} (2009), 863-869.

\bibitem{ELS2}
{\sc A. Elhashash and D.B. Szyld}, {\it On general matrices having the Perron--Frobenius property}, ELA {\bf 17} (2008), 389-413.

\bibitem{EHT}
{\sc E.M. Ellison, L. Hogben and M.J. Tsatsomeros}, {\it Sign patterns that require eventual positivity or require eventual nonnegativity}, ELA {\bf 19} (2009-2010), 98-107.

\bibitem{FAJ}
{\sc Sh. Fallat and C.R. Johnson}, {\it Totally nonnegative matrices}, Princ. Univ. Press, 2011.

\bibitem{FIP}
{\sc M. Fiedler and V. Pt\'{a}k}, {\it On matrices with non-positive off-diagonal elements and positive principal minors}, Czech. Math. J. {\bf 22 (87)} (1962), 382-400.

\bibitem{FRIED2}
{\sc S. Friedland}, {\it On an inverse problem for nonnegative and eventually nannegative matrices}, Israel Journal of Mathematics {\bf 29} (1978), 43-60.

\bibitem{GANT}
{\sc F. Gantmacher}, {\it The Theory of Matrices,} Volume 1,
Volume 2. Chelsea. Publ. New York, 1990.

\bibitem{GANTKREI}
{\sc F.R. Gantmacher and M.G. Krein},
{\it Oscillation Matrices and Kernels and Small Vibrations of Mechanical Systems,} AMS
Bookstore, 2002.

\bibitem{GLALU}
{\sc I.M. Glazman and Yu.I. Liubich}, {\it Finite-Dimensional Linear Analysis: A Systematic Presentation in Problem Form,} MIT Press, 1974.

\bibitem{JT}
{\sc C. R. Johnson and P. Tarazaga}, {\it On Matrices with Perron–Frobenius Properties and Some Negative Entries,} Positivity {\bf 8} (2004), 327-338.

\bibitem{KU}
{\sc O.Y. Kushel}, {\it Cone-theoretic generalization of total positivity}, Linear Algebra Appl. {\bf 436} (2012), 537-560.

\bibitem{NO}
{\sc D. Noutsos}, {\it On Perron--Frobenius property of matrices having some negative entries}, Linear Algebra Appl. {\bf 412} (2006), 132--153.

\bibitem{PINK}
{\sc A. Pinkus}, {\it Totally positive matrices,} Cambridge University
Press, 2010.

\bibitem{TARH}
{\sc P. Tarazaga, M. Raydan and A. Hurman}, {\it Perron--Frobenius theorem for matrices with some negative entries,} Linear Algebra Appl. {\bf 328} (2001), 57--68.

\end{thebibliography}
\end{document}